\theoremstyle{plain}
\newtheorem{theorem}{Theorem}[section]
\newtheorem{corollary}[theorem]{Corollary}
\newtheorem{lemma}[theorem]{Lemma}
\newtheorem{example}[theorem]{Example}
\begin{document}

\title[A Profinite Module Invariant]{A Profinite Group Invariant for Hyperbolic Toral Automorphisms}
\author[Bakker]{Lennard F. Bakker}
\address{Department of Mathematics, Brigham Young University, Provo, Utah, USA}
\email{bakker@math.byu.edu}
\author[Martins Rodrigues]{Pedro Martins Rodrigues}
\address{Department of Mathematics, Instituto Superior T\'ecnico, Univ. Tec. Lisboa, Lisboa, Portugal}
\email{pmartins@math.ist.utl.pt}

\begin{abstract} For a hyperbolic toral automorphism, we construct a profinite completion of an isomorphic copy of the homoclinic group of its right action using isomorphic copies of the periodic data of its left action. The resulting profinite group has a natural module structure over a ring determined by the right action of the hyperbolic toral automorphism. This module is an invariant of conjugacy that provides means in which to characterize when two similar hyperbolic toral automorphisms are conjugate or not. In particular, this shows for two similar hyperbolic toral automorphisms with module isomorphic left action periodic data, that the homoclinic groups of their right actions play the key role in determining whether or not they are conjugate. This gives a complete set of dynamically significant invariants for the topological classification of hyperbolic toral automorphisms.
\end{abstract}

\maketitle

\section{Introduction} The topological classification of hyperbolic, irreducible, toral automorphisms brings together the subjects of dynamical systems, algebra, and algebraic number theory. It is well-known \cite{AP} that two $\mathbb{T}^{n}$ automorphisms induced by $A,B \in GL(n,\mathbb{Z})$ are topologically conjugate if and only if $A$ and $B$ are conjugate in within the group ${\rm GL}(n,{\mathbb Z})$, i.e., there is $C\in{\rm GL}(n,{\mathbb Z})$ such that $AC=CB$. Furthermore, by a well-known result of Latimer, MacDuffee and Taussky (see \cite{Ne,Ta, Wa}), this happens if and only if  $A$ and $B$ are associated to the same ideal class in the number ring determined by their common characteristic polynomial. In both these settings, algorithms have been developed that, in principle, determine when two automorphisms are conjugate \cite{Co,GS}. For instance, the case of $n=2$ uses classical results about continued fractions \cite{ATW, MS1}, while the case $n\geq 3$ uses recently developed geometric continued fractions \cite{Ko}. Yet there is not yet a clear understanding of the topological classification as would be given by a complete set of dynamically significant invariants. 

The periodic data for a hyperbolic, irreducible, toral automorphism is insufficient to characterize its conjugacy class, and yet provides a dynamically significant invariant of conjugacy (see \cite{MS2}). Let ${\rm Per}_{l}(A)$ denote the finite group of $l$-periodic points for the left action of $A$ in $\mathbb{T}^{n}=\mathbb{R}^{n}/\mathbb{Z}^{n}$, represented by columns vectors. It is possible for hyperbolic, irreducible, toral automorphisms $A$ and $B$ to have conjugate actions on the ${\rm Per}_{l}$ level, for every $l$, without $A$ and $B$ being conjugate while being similar, i.e., there is $C\in{\rm GL}(n,{\mathbb Q})$ such that $AC=CB$. More generally, suppose that $A$ and $B$ have the same irreducible characteristic polynomial $p(x)$, let $R=\mathbb{Z}[x]/(p(x))$, and set ${\rm Per}_{g}(A)={\rm ker}(g(A)) \subset \mathbb{T}^{n}$, where $g\in{\mathbb Z}[x]$ with $g(A)$ invertible. It is still possible that $A$ and $B$ are not conjugate on ${\mathbb T}^n$, and yet are strongly Bowen-Franks equivalent (also written strongly BF-equivalent), i.e., associated to every $g\in {\mathbb Z}[x]$ with $g(A)$ invertible, is an $R$-module isomorphism $\phi_g:{\rm Per}_{g}(A)\to{\rm Per}_{g}(B)$ that conjugates $A$ and $B$ on the level of periodic points determined by $g$. It is then natural to consider what additional dynamically significant invariants are needed to determine when two strongly BF-equivalent similar hyperbolic, irreducible, toral automorphisms are conjugate or not, i.e., when the level conjugacies would imply the existence of a global conjugacy.  

This paper explores this approach, from a dual point of view: instead of the direct limit of the $R$-modules ${\rm Per}_{g}(A)$, the main object is a profinite limit of the Pontryagin dual $R$-modules $\mathbb{Z}^{n}/\mathbb{Z}^{n}g(A)$, given by the right action of $g(A)$ on ${\mathbb Z}^n$ represented by row vectors. We review in Section 2 some basic definitions and facts about profinite groups. Then in Section 3, we present the construction of a profinite group $G_{A}$ associated to a hyperbolic $A\in{\rm GL}(n,{\mathbb Z})$. This $G_A$ is an $R$-module. It is a profinite completion of the right ${\mathbb Z}[A]$-module ${\mathbb Z}^n$ which is $R$-module isomorphic to the homoclinic group $H_A^\prime$ of the right action of hyperbolic toral automorphism induced by $A$. As an invariant of conjugacy, the $R$-module $H_A^\prime$ naturally embeds into $G_A$ and plays the key role in determining when two strongly BF-equivalent similar hyperbolic toral automorphisms are conjugate or not.

Obviously, strong BF-equivalence is a necessary condition for conjugacy of similar hyperbolic toral automorphisms. In Section 4, we describe conditions by which two similar hyperbolic toral automorphisms are strongly BF-equivalent in the dual sense, i.e., ${\mathbb Z}^n/{\mathbb Z}^n g(A)$ is $R$-module isomorphic to ${\mathbb Z}^n/{\mathbb Z}^n g(B)$ for every $g\in {\mathbb Z}[x]$ with $g(A)$ invertible. Then in Section 5 we show that strong BF-equivalence of similar hyperbolic $A,B\in{\rm GL}(n,{\mathbb Z})$ implies the existence of a topological $R$-module isomorphism $\Psi:G_A\to G_B$. In Section 6, we detail the exact manner by which embedded copies of $H_A^\prime$ and $H_B^\prime$ in $G_A$ and $G_B$ respectively, characterize when $A$ and $B$ are conjugate or not. Specifically, for strong BF-equivalent similar hyperbolic $A$ and $B$, it is how the image of the embedded copy of $H_A^\prime$ under any topological $R$-module isomorphism $\Psi:G_A\to G_B$ intersects the embedded copy of $H_B^\prime$ in $G_B$, that determines conjugacy or the lack thereof. This characterization of conjugacy applies when the characteristic polynomial of the similar $A$ and $B$ is irreducible or reducible. When the similar hyperbolic $A$ and $B$ have an irreducible characteristic polynomial, we show in Section 7 that the intersection of the image of the embedded copy of $H_A^\prime$ in $G_A$ under any topological $R$-module isomorphism $\Psi:G_A\to G_B$ with the embedded copy of $H_B^\prime$ in $G_B$ is either trivial or has finite index in the embedded copy of $H_B^\prime$ in $G_B$ 

The characterization of conjugacy for strongly BF-equivalent similar hyperbolic $A,B\in{\rm GL}(n,{\mathbb Z})$ show that the embedded copies of $H_A^\prime$ and $H_B^\prime$ in $G_A$ and $G_B$ are the additional dynamically significant invariants that are needed. It is known \cite{MS1.5} that the  homoclinic groups $H_A^\prime$ and $H_B^\prime$ are complete invariants of conjugacy when $A$ and $B$ are Pisot, i.e., have one real eigenvalue larger than $1$ in modulus with all the remaining eigenvalues smaller than $1$ in modulus. Our characterization of conjugacy extends the role of the homoclinic groups as classifying invariants from the Pisot case to the general case. This characterization of conjugacy also resolves the problem of classification of quasiperiodic flows of Koch type according to the equivalence relation of projective conjugacy (see \cite{Ba}), and gives a dynamical systems resolution to the ideal class problem in algebraic number theory.

From a computational point of view, our dynamical characterization of conjugacy for strongly BF-equivalent similar hyperbolic $A,B\in{\rm GL}(n,{\mathbb Z})$ is not completely satisfactory. We would like to better understand how to computationally detect when the conditions for characterization hold or are violated for the $R$-submodules $H_A^\prime$ and $H_B^\prime$ in $G_A$ and $G_B$ respectively. We are currently investigating some computational avenues for this detection that we hope to include in a subsequent paper.

\section{Basic Theory of Profinite Groups}\label{basic} We review the construction of and set the notation for profinite groups built from the left and right actions of an $A\in{\rm GL}(n,{\mathbb Z})$ on the integer lattice ${\mathbb Z}^n$. We list below many of the basic properties of these profinite groups. The statements and proofs of these basic properties are from \cite{RZ}, where the proofs appropriately adapt, when needed, from the group-theoretic setting to the $R$-module setting without difficulty. In the next section, we construct a particular profinite group with an $R$-module structure for a hyperbolic toral automorphism.  

The ring for the module structure on the profinite groups is ${\mathbb Z}[A]=\{q(A):q\in{\mathbb Z}[x]\}$ where ${\mathbb Z}[x]$ is the ring of polynomials with integer coefficients. The abelian group $G={\mathbb Z}^n$, represented by row vectors, is naturally a ${\mathbb Z}[A]$-module with the right action of $A$ on $m\in{\mathbb Z}^n$ defined by $m\to mA$.

We define a topology on $G$ as follows. Let ${\mathcal N}$ be a collection of ${\mathbb Z}[A]$-submodules of ${\mathbb Z}^n$ of finite index that is filtered from below, i.e., for $N,M\in {\mathcal N}$ there is $K\in{\mathcal N}$ such that $K\leq N\cap M$. The elements of ${\mathcal N}$, considered as a fundamental system of neighbourhoods of the identity element $0$ of $G$, endows $G$ with a profinite topology.

We make ${\mathcal N}$ into a directed poset with the binary relation $\preceq$ defined by $M\preceq N$ when $M\geq N$. The binary relation $\preceq$ satisfies $N\preceq N$ for all $N\in{\mathcal N}$, $N\preceq M$ and $M\preceq K$ imply $N\preceq K$ for $M,N,K\in{\mathcal N}$, $N\preceq M$ and $M\preceq N$ imply $N=M$, and, as a consequence of the filtered from below property of ${\mathcal N}$, for $N,M\in{\mathcal N}$ there is $K\in{\mathcal N}$ such that $N\preceq K$ and $M\preceq K$. 

We use ${\mathcal N}$ to form a surjective inverse system of finite ${\mathbb Z}[A]$-modules. For $N,M\in{\mathcal N}$ with $M\preceq N$, let $\varphi_{NM}$ denote the canonical ${\mathbb Z}[A]$-module epimorphism from $G/N$ to $G/M$ defined by $[m]_N\to [m]_M$ where $[m]_N=m+N$ and $[m]_M = m+M$. For each $N\in{\mathcal N}$, the action of $A$ on $G/N$ is the automorphism $A_N$ defined by
\[ A_N([m]_N) = [mA]_N.\]
This $A_N$ induces the module structure of ${\mathbb Z}[A]$ on $G/N$. For each $N\in{\mathcal N}$ endow $G/N$ with the discrete topology. This makes each $G/N$ a compact Hausdorff ${\mathbb Z}[A]$-module, each $A_N$ a homeomorphism, and each $\varphi_{NM}$ continuous epimorphism for $N\preceq M$. The collection $\{ G/N,\varphi_{NM},{\mathcal N}\}$ is a surjective inverse system, i.e., each $\varphi_{NM}$ is a continuous epimorphism, and $\varphi_{NK} = \varphi_{NM}\varphi_{MK}$ whenever $K\preceq M\preceq N$. 

A profinite completion of $G$ with respect to its profinite topology defined by ${\mathcal N}$ is the inverse limit of the surjective inverse system $\{G/N,\varphi_{NM},{\mathcal N}\}$. The inverse limit is the ${\mathbb Z}[A]$-submodule,
\[ G_{\mathcal N} \equiv \lim_{\longleftarrow} {}_{N\in{\mathcal N}} G/N \leq \prod_{N\in{\mathcal N}} G/N,\]
the set consisting of the tuples $\{ [m_N]_N\}_{N\in{\mathcal N}}$ for $m_N\in {\mathbb Z}^n$ satisfying $[m_M]_M=\varphi_{NM}([m_N]_N) = [m_N]_M$ whenever $M\preceq N$. With the topology on $G_{\mathcal N}$ being that induced by the product topology on $\Pi_{N\in{\mathcal N}}G/N$, the inverse limit $G_{\mathcal N}$ is a nonempty, compact, Hausdorff, totally disconnected ${\mathbb Z}[A]$-submodule with canonical continuous ${\mathbb Z}[A]$-module epimorphisms $\varphi_N:G_{\mathcal N} \to G/N$ defined by
\[ \varphi_N( \{ [m_M]_M \}_{M\in{\mathcal N}}) = [m_N]_N\]
that satisfy the compatibility conditions $\varphi_{NM}\varphi_N = \varphi_M$ for $M\preceq N$. The collection
\[ \{ {\rm ker}(\varphi_N):N\in{\mathcal N}\}\]
of open ${\mathbb Z}[A]$-submodules of $G_{\mathcal N}$ form a fundamental system of open neighbourhoods of $0$ in $G_{\mathcal N}$. The continous epimorphisms $A_N \varphi_N: G_{\mathcal N} \to G/N$, $N\in{\mathcal N}$, satisfy the compatibility conditions $\varphi_{NM}A_N\varphi_N = A_M\varphi_M$ whenever $M\preceq N$, and so induce a topological automorphism $\Gamma_A$ on $G_{\mathcal N}$ defined by
\[ \Gamma_A( \{[m_N]_N\}_{N\in{\mathcal N}}) = \{ [m_NA]_N\}_{N\in{\mathcal N}}.\]
This $\Gamma_A$ induces the module structure of ${\mathbb Z}[A]$ on $G_{\mathcal N}$.

A ${\mathbb Z}[A]$-module homomorphic copy of $G$ naturally sits inside $G_{\mathcal N}$. There is a canonical ${\mathbb Z}[A]$-module homomorphism $\iota_{\mathcal N}:G\to G_{\mathcal N}$ defined by
\[ \iota_{\mathcal N}(m) = \{ [m]_N\}_{N\in{\mathcal N}},\]
where $\iota_{\mathcal N}(G)$ is a dense ${\mathbb Z}[A]$-submodule of $G_{\mathcal N}$. This implies that $G_{\mathcal N}$ is finitely-generated, i.e.,
\[ G_{\mathcal N}=\overline{\langle \iota_{\mathcal N}(e_1),\dots,\iota_{\mathcal N}(e_n)\rangle},\]
which is the closure of group generated by the images under $\iota_{\mathcal N}$ of the standard basis of row vectors $e_1,\dots,e_n$ for $G$. The ${\mathbb Z}[A]$-homomorphism $\iota_{\mathcal N}$ is a ${\mathbb Z}[A]$-monomorphism if and only if 
\[ \bigcap_{N\in{\mathcal N}} N = \{0\}.\]
When $\iota_{\mathcal N}$ is a ${\mathbb Z}[A]$-module monomorphism, the ${\mathbb Z}[A]$-module $G_{\mathcal N}$ is uncountable.

The ${\mathbb Z}[A]$-module $G_{\mathcal N}$ is realized ${\mathbb Z}[A]$-module isomorphically  through all of its open ${\mathbb Z}[A]$-submodules. A ${\mathbb Z}[A]$-submodule $U$ of $G_{\mathcal N}$ is open in $G_{\mathcal N}$ if and only if $U$ is closed of finite index in $G_{\mathcal N}$. Because $G_{\mathcal N}$ is finitely-generated, there is for each $l\in{\mathbb N}$ only finitely many open ${\mathbb Z}[A]$-submodules of $G_{\mathcal N}$ of index $l$. The collection ${\mathcal U}$ of all of the open ${\mathbb Z}[A]$-submodules of $G_{\mathcal N}$ is filtered from below and forms a fundamental system of neighbourhoods of $0$ in $G_{\mathcal N}$ that satisfy
\[ \bigcap _{U\in{\mathcal U}} U = \{0\}.\]
The collection ${\mathcal U}$ includes the collection $\{ {\rm ker}(\varphi_N):N\in{\mathcal N}\}$. We make ${\mathcal U}$ into a directed poset by defining $V\preceq U$ when $U\leq V$. This gives a surjective inverse system $\{G_{\mathcal N}/U,\varphi_{UV},{\mathcal U}\}$ where $\varphi_{UV}:G_{\mathcal N}/U \to G_{\mathcal N}/V$ are the continuous canonical ${\mathbb Z}[A]$-module epimorphisms. There is a topological ${\mathbb Z}[A]$-module isomorphism $\psi$ from $G_{\mathcal N}$ to the inverse limit of $\{G_{\mathcal N}/U,\varphi_{UV},{\mathcal U}\}$, i.e., to
\[ G_{\mathcal U}=\lim_{\longleftarrow}{}_{U\in{\mathcal U}} G_{\mathcal N}/U\]
with the continuous canonical epimorphisms $\varphi_U:G_{\mathcal U}\to G_{\mathcal N}/U$. The map $\psi$ is induced by the continuous canonical epimorphisms $\psi_U:G_{\mathcal N}\to G_{\mathcal N}/U$ that are compatible, i.e., $\varphi_{UV}\psi_U=\psi_V$ when $V\preceq U$, such that $\varphi_U\psi=\psi_U$ holds for all $U\in{\mathcal U}$.

A subcollection of open ${\mathbb Z}[A]$-submodules of $G_{\mathcal N}$ may sometimes realize $G_{\mathcal N}$ as well. A cofinal subsystem of $\{ G_{\mathcal N}/U,\varphi_{UV},{\mathcal U}\}$ is a surjective inverse system $\{G_{\mathcal N}/K,\varphi_{KL},{\mathcal K}\}$ where  ${\mathcal K}$ is a subset of ${\mathcal U}$ such that ${\mathcal K}$ is a directed poset with respect to the binary relation $\preceq$ on ${\mathcal U}$, and such that for each $U\in{\mathcal U}$ there is $K\in{\mathcal K}$ with $U\preceq K$. When $\{G_{\mathcal N}/K,\varphi_{KL},{\mathcal K}\}$ is a cofinal subsystem of $\{ G_{\mathcal N}/U,\varphi_{UV},{\mathcal U}\}$, there is a topological ${\mathbb Z}[A]$-module isomorphism $\phi:G_{\mathcal K}\to G_{\mathcal U}$  where $G_{\mathcal K}$ is the inverse limit of the cofinal subsystem with its continuous canonical epimorphisms $\varphi_K: G_{\mathcal N} \to G_{\mathcal N}/K$. The map $\phi$ is induced by the continuous canonical epimorphisms $\bar\varphi_U: G_{\mathcal K}\to G_{\mathcal N}/U$ defined independently of $K$ by $\bar\varphi = \varphi_{KU} \varphi_K$ when $U\preceq K$, which are compatible, i.e., $\varphi_{UV}\bar\varphi_U=\bar\varphi_V$ when $V\preceq U$, such that $\varphi_U\phi=\bar\varphi_U$ holds for all $U\in {\mathcal U}$.

\section{A ${\mathbb Z}[A]$-Module For Hyperbolic $A$} We use the Pontryagin dual of periodic data for the left action of a hyperbolic toral automorphism to construct a profinite group associated to it. Let $A\in{\rm GL}(n,{\mathbb  Z})$ be hyperbolic, and let $T_A$ be the hyperbolic toral automorphism that the left action of $A$ induces on ${\mathbb T}^n$, i.e., $T_A\pi = \pi A$ where $\pi:{\mathbb R}^n\to {\mathbb T}^n$ is the canonical covering epimorphism with ${\mathbb R}^n$ and ${\mathbb T}^n$ represented by column vectors. Hyperbolicity of $A$ implies that ${\rm det}(A^r-I)\ne 0$ for all $r\in{\mathbb Z}$. Let $T_A^\prime$ be the hyperbolic toral automorphism that the right action $A$ on ${\mathbb Z}^n$, represented by row vectors, induces on ${\mathbb T}^n$, represented by row vectors. Let 
\[ H_A^\prime =\{ x\in {\mathbb T}^n: x(T_A^\prime)^k\to 0{\rm\ as\ }\vert k\vert\to\infty\}\]
 denote the homoclinic group for $T^\prime_A$, i.e., the homoclinic group for the right action of $T_A$. Then there is a ${\mathbb Z}[A]$-module isomorphism $\theta_A:{\mathbb Z}^n\to H_A^\prime$ from the right ${\mathbb Z}[A]$-module ${\mathbb Z}^n$ to the right ${\mathbb Z}[A]$-module $H_A^\prime$ (see \cite{LS}). For each $k\in{\mathbb Z}$, we also use the right action of $A$ on ${\mathbb Z}^n$ to define the abelian groups
\[ N_{k,A} = {\mathbb Z}^n(A^{k!}-I) {\rm \ and\ } G_{k,A} = {\mathbb Z}^n/N_{k,A}.\]
The finite abelian groups $G_{k,A}$, having  elements of the form $[m]_{k,A}=m+N_{k,A}$, are generalized Bowen-Franks groups  which are isomorphic to the Pontryagin duals of ${\rm Per}_{k!}(T_A)$, the group of periodic points of period $k!$ for the left action $T_A$ on ${\mathbb T}^n$. Obviously, the collection ${\mathcal N}_A=\{ N_{k,A}:k\in{\mathbb N}\}$ consists of ${\mathbb Z}[A]$-submodules of ${\mathbb Z}^n$ of finite index. 

\begin{lemma}\label{NA} For any $A\in{\rm GL}(n,{\mathbb Z})$, the collection ${\mathcal N}_A$ satisfies $N_{k,A}\geq N_{k+1,A}$ for all $k\in{\mathbb N}$ and is filtered from below. If, in addition, $A$ is hyperbolic, then ${\mathcal N}_A$ also satisfies
\[ \bigcap_{k\in{\mathbb N}} N_{k,A} = \{0\}.\]
\end{lemma}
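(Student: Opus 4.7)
The plan is to dispatch the two easy properties first. For the descending chain property $N_{k+1,A}\subseteq N_{k,A}$, I would factor
\[ A^{(k+1)!}-I = (A^{k!}-I)\,Q_k(A),\qquad Q_k(x)=1+x^{k!}+x^{2\cdot k!}+\cdots +x^{k\cdot k!}, \]
using $(k+1)! = (k+1)\cdot k!$. Because ${\mathbb Z}[A]$ is commutative, every element $m(A^{(k+1)!}-I) = mQ_k(A)(A^{k!}-I)$ lies in $N_{k,A}$, giving the containment. Being filtered from below is then immediate: for $N_{k,A},N_{j,A}\in{\mathcal N}_A$, take $\ell=\max(k,j)$, whence $N_{\ell,A}\subseteq N_{k,A}\cap N_{j,A}$ by iterating the first property.

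For the hyperbolic case, I would work geometrically in ${\mathbb R}^n$, using the right-action splitting ${\mathbb R}^n={\mathcal E}^s\oplus{\mathcal E}^u$ associated to eigenvalues of modulus less than and greater than $1$ respectively (both subspaces are invariant under any polynomial in $A$ and under its inverse whenever that inverse exists). Fix $m\in\bigcap_k N_{k,A}$; for each $k$ there is $v_k\in{\mathbb Z}^n$ with $m=v_k(A^{k!}-I)$, and since $A^{k!}-I$ is invertible over ${\mathbb R}$ by hyperbolicity, this $v_k$ is unique and equals $m(A^{k!}-I)^{-1}$ in ${\mathbb R}^n$.

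Writing $m=m^s+m^u$ and $v_k=v_k^s+v_k^u$ along the splitting, I would estimate each component. On ${\mathcal E}^s$, $A^{k!}$ contracts to $0$, so $(A^{k!}-I)^{-1}\to -I$ in operator norm and $v_k^s\to -m^s$. On ${\mathcal E}^u$, the identity $(A^{k!}-I)^{-1}=-A^{-k!}(I-A^{-k!})^{-1}$ together with $A^{-k!}|_{{\mathcal E}^u}\to 0$ shows that $(A^{k!}-I)^{-1}\to 0$ on ${\mathcal E}^u$, hence $v_k^u\to 0$. Thus $v_k\to -m^s$ in ${\mathbb R}^n$, and since ${\mathbb Z}^n$ is discrete the sequence $v_k$ is eventually equal to some fixed $v\in{\mathbb Z}^n$.

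To finish, I would substitute this stabilized value back: for all large $k$, $m=v(A^{k!}-I)=v(A^{(k+1)!}-I)$, so $vA^{k!}\bigl(A^{(k+1)!-k!}-I\bigr)=0$. Since $A$ is invertible and the hypothesis $\det(A^r-I)\ne 0$ makes $A^{(k+1)!-k!}-I$ invertible, $v=0$ and therefore $m=v(A^{k!}-I)=0$. The main obstacle is the third claim: producing the norm estimates for $(A^{k!}-I)^{-1}$ on ${\mathcal E}^s$ and ${\mathcal E}^u$, and then making essential use of the elementary but crucial fact that an integer-valued sequence converging in ${\mathbb R}^n$ must stabilize. It is this stabilization that transfers the analytic information about $v_k$ into the purely algebraic conclusion $v=0$.
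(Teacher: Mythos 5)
Your proof is correct, and while the first two claims follow essentially the paper's own route, your argument for the trivial intersection is genuinely different. For the chain property you use the same telescoping factorization $x^{(k+1)!}-1=(x^{k!}-1)(1+x^{k!}+\cdots+x^{k\cdot k!})$ as the paper; for the filtration your choice $\ell=\max(k,j)$ is actually a small simplification, since the paper instead reproves a factorization with $k_3=k_1+k_2$ rather than just iterating the chain containment. The divergence is in the third claim: the paper argues by contradiction, writing $m=\xi_k(A^{k!}-I)$, noting that $\xi_kA^{k!}$ stays at the fixed distance $\Vert m\Vert$ from $\xi_k$, and invoking the exponential growth of $\Vert\xi A^{k!}\Vert$ for nonzero lattice vectors (citing Katok--Hasselblatt, Prop.\ 1.2.8) to rule this out for large $k$. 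You instead solve explicitly for $v_k=m(A^{k!}-I)^{-1}$, prove $v_k^s\to-m^s$ and $v_k^u\to0$ by operator-norm estimates on the stable and unstable subspaces, use discreteness of ${\mathbb Z}^n$ to force the sequence to stabilize, and then eliminate the stabilized value algebraically from $vA^{k!}(A^{k\cdot k!}-I)=0$. Your route is more self-contained and quantitative: it needs no external growth estimate, it sidesteps the uniformity question raised by the fact that the $\xi_k$ vary with $k$ in the paper's argument, and it concludes $m=0$ directly rather than by contradiction, at the cost of being somewhat longer than the paper's citation-based proof. One cosmetic slip: on ${\mathcal E}^u$ the correct identity is $(A^{k!}-I)^{-1}=A^{-k!}(I-A^{-k!})^{-1}$ (no leading minus sign); this does not affect the conclusion $v_k^u\to0$.
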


\begin{proof} First we show that $N_{k,A}\geq N_{k+1,A}$. For each $k\in{\mathbb N}$, there is the factorization
\[ A^{(k+1)!} - I = (A^{(k+1)!-k!} + A^{(k+1)!-2k!} + \cdot\cdot\cdot + A^{k!} + I)(A^{k!}-I),\]
where within the first set of braces on the right-hand side, the power of $A$ eventually becomes $(k+1)!-kk!=k!$. The map
\[ \xi \to \xi (A^{(k+1)!-k!} + A^{(k+1)!-2k!} + \cdot\cdot\cdot + A^{k!} + I) \]
is an endomorphism of ${\mathbb Z}^n$. Hence
\begin{align*} {\mathbb Z}^n(A^{(k+1)!}-I) & = {\mathbb Z}^n(A^{(k+1)!-k!} + A^{(k+1)!-2k!} + \cdot\cdot\cdot + A^{k!} + I)(A^{k!}-I) \\
&\leq {\mathbb Z}^n(A^{k!}-I),
\end{align*}
that is $N_{k,A}\geq N_{k+1,A}$.

Next we show that ${\mathcal N}_A$ is filtered from below. For $k_3=k_1+k_2$ with $k_1,k_2\in{\mathbb N}$, there is the factorization
\[ A^{k_3!}-I = (A^{k_3!-k_1!} + A^{k_3!-2k_1!} + \cdot\cdot\cdot + A^{k_1!} + I)(A^{k_1!}-I).\]
In the first set of braces on the right-hand side, the power of the $A$ eventually becomes
\[ k_3! - [k_3(k_3-1)\cdot\cdot\cdot(k_1+1)-1]k_1!=k_1!.\]
Since the term in the first set of braces on the right side of the factorization is an endomorphism of ${\mathbb Z}^n$, it follows that $N_{k_1,A}\geq N_{k_3,A}$. Similarly, $N_{k_2,A}\geq N_{k_3,A}$, and so $N_{k_3,A}\leq N_{k_1,A}\cap N_{k_2,A}$. Hence ${\mathcal N}_A$ is filtered from below.

Finally we show that the intersection of all the $N_{k,A}$ is $\{0\}$ by contradiction. Suppose there is $m\in{\mathbb Z}^n\setminus\{0\}$ such that $m\in\cap_{k\in{\mathbb N}} N_{k,A}$. Then for each $k\in{\mathbb N}$, there is $\xi_k\in{\mathbb Z}^n\setminus\{0\}$ such that $m = \xi_k(A^{k!}-I)$. This means that $A^{k!}$ sends an element of ${\mathbb Z}^n$ to another element of ${\mathbb Z}^n$ that is a distance of $\Vert m\Vert$ away from $\xi_k$. (Here $\Vert\cdot\Vert$ is the usually Euclidean norm on ${\mathbb R}^n$.) Let $E^s(A)$ and $E^u(A)$ be the stable and unstable spaces of $A$ acting on ${\mathbb R}^n$. The only element of ${\mathbb Z}^n$ that belongs to $E^s(A)\cup E^u(A)$ is $0$ because no lattice point can iterate under $A$ or $A^{-1}$ to $0$. The  hyperbolicity of $A$ implies that $\Vert \xi A^{k!}\Vert$, for $\xi\in{\mathbb Z}^n\setminus\{0\}$, goes to $\infty$ with exponential speed as $k\to\infty$ (see Proposition 1.2.8 on p.\,28 in \cite{KH}). This means that for all sufficiently large $k$, each $A^{k!}$ sends the points in ${\mathbb Z}^n$ a distance of $\Vert m\Vert$ away from $\xi_k$, to a distance much larger than $\Vert m\Vert$ away from $\xi_k$. This is a contradiction.
\end{proof}

We associate to a hyperbolic $A\in{\rm GL}(n,{\mathbb Z})$ a ${\mathbb Z}[A]$-module $G_A$ as follows, where, to set some notation in this context, we state some of the basic properties from Section \ref{basic} without explicit reference. By Lemma \ref{NA}, the binary relation $\preceq$ on the filtered from below collection ${\mathcal N}_A$, indexed by ${\mathbb N}$, simplifies to $l\leq k$ for $l,k\in{\mathbb N}$ implies $N_{l,A}\geq N_{k,A}$. We denote the elements of $G_{k,A}$ by $[m]_{k,A}=m+N_{k,A}$ for $m\in{\mathbb Z}^n$. The ${\mathbb Z}[A]$-module structure on $G_{k,A}$ is induced by the automorphism $A_k$ of $G_{k,A}$ defined by $A_k([m]_{k,A}) = [mA]_{k,A}$. For $l\leq k$, there are the continuous canonical ${\mathbb Z}[A]$-module epimorphisms $\varphi_{k,l,A}:G_{k,A}\to G_{l,A}$ defined by $\varphi_{k,l,A}([m]_{k,A}) = [m]_{l,A}$. The inverse limit of the surjective inverse system $\{ G_{k,A},\varphi_{k,l,A},{\mathbb N}\}$ is the compact, Hausdorff, totally disconnected, ${\mathbb Z}[A]$-module
\[ G_A \equiv \lim_{\longleftarrow}{}_{k\in{\mathbb N}} G_{k,A} \leq  \prod_{k\in{\mathbb N}} G_{k,A}\]
consisting of the tuples $\{[m_k]_{k,A}\}_{k\in{\mathbb N}}$ for $m_k\in {\mathbb Z}$ satisfying $[m_l]_{l,A}=\varphi_{kl}([m_k]_{k,A})=[m_k]_{l,A}$ when $l\leq k$, together with the continuous canonical ${\mathbb Z}[A]$-module epimorphisms $\varphi_{k,A}:G_A\to G_{k,A}$ defined by
\[ \varphi_{k,A}( \{[m_k]_{k,A}\}_{k\in{\mathbb N}}) = [m_k]_{k,A}.\]
The collection $\{ {\rm ker}(\varphi_{k,A}):k\in{\mathbb N}\}$ form a fundamental system of open ${\mathbb Z}[A]$-submodules of $0$ in $G_A$. The ${\mathbb Z}[A]$-module structure on $G_A$ is induced by the topological automorphism $\Gamma_A$ defined by $\Gamma_A( \{[m_k]_{k,A}\}_{k\in{\mathbb N}}) = \{A_k[m_k]_{k,A}\}_{k\in{\mathbb N}}$. By Lemma \ref{NA}, the canonical ${\mathbb Z}[A]$-module homomorphism $\iota_A:{\mathbb Z}^n\to G_A$ defined by $\iota_A(m) = \{[m]_{k,A}\}_{k\in{\mathbb N}}$ is a monomorphism, and so $G_A$ is uncountable.

We could have constructed $G_A$ from the filtered from below directed poset ${\mathcal N}$ of all the finite ${\mathbb Z}[A]$-modules $BF_g(A)$ for $g\in{\mathbb Z}[x]$ with $g(A)$ invertible. However, because ${\mathcal N}_A$ is cofinal in ${\mathcal N}$, we would have constructed a ${\mathbb Z}[A]$-module from ${\mathcal N}$ that would have been ${\mathbb Z}[A]$-module isomorphic to $G_A$. We prefer the cofinal subcollection because of its linear ordering.

For another hyperbolic $B\in{\rm GL}(n,{\mathbb Z})$ that is similar to $A$, there is a ring $R$ such that $G_{k,A}$, $G_{k,B}$, $G_A$, $G_B$, etc., are all $R$-modules. Similarity of $A$ and $B$ implies that ${\mathbb Z}[A]$ and ${\mathbb Z}[B]$ are isomorphic as rings. We denote by $R$ a ring that is isomorphic to ${\mathbb Z}[A]$ and ${\mathbb Z}[B]$, and understand now that the $R$-module structure on $G_A$ and $G_B$ is given by ${\mathbb Z}[A]$ and ${\mathbb Z}[B]$ respectively. When $A$ and $B$ have the same irreducible characteristic polynomial $p(x)$, we take $R={\mathbb Z}[x]/\big(p(x)\big)$, where $\big(p(x)\big)=p(x){\mathbb Z}[x]$ is the principal ideal in ${\mathbb Z}[x]$ generated by $p(x)$. When the characteristic polynomial $p(x)$ of $A$ and $B$ is not irreducible, we take $R=\big( {\mathbb Z}[x]/\big(p(x)\big)\big)/{\rm ker}(\theta_D)$ where, for $D=A$ or $D=B$, the map $\theta_D:{\mathbb Z}[x]/\big(p(x)\big)\to {\mathbb Z}[D]$ is the ring endomorphism defined by $\vartheta_D\big( q(x)+(p(x))\big) = q(D)$; it does matter whether $D=A$ or $D=B$ because the similarity of $A$ and $B$ implies that ${\rm ker}(\theta_A)={\rm ker}(\theta_B)$.

\section{Sufficient Conditions for Strong BF-Equivalence} For similar hyperbolic $A,B\in{\rm GL}(n,{\mathbb Z})$, the finite $R$-modules $G_{k,A},G_{k,B}$ for $k\in{\mathbb N}$ are invariants of the conjugacy classes of $A$ and $B$ respectively in terms of their $R$-module structure. More generally, for $g\in{\mathbb Z}[x]$ with $g(A)$ (and hence $g(B)$) invertible, the finite $R$-modules ${\mathbb Z}^n/{\mathbb Z}^n g(A)$ and ${\mathbb Z}^n/{\mathbb Z}^n g(B)$ are invariants of the conjugacy classes of $A$ and $B$ in terms of their $R$-module structures. The generalized Bowen-Franks groups are ${\rm BF}_g(A)={\mathbb Z}^n/{\mathbb Z}^n g(A)$ for $g\in{\mathbb Z}[x]$ with $g(A)$ invertible. We note that $G_{k,A}={\rm BF}_{g_k}(A)$ where $g_k(x)=x^{k!}-1$, $k\in{\mathbb N}$. For two $R$-modules $G_1$ and $G_2$, we write $G_1\cong_R G_2$ when $G_1$ and $G_2$ are $R$-module isomorphic. Similar hyperbolic $A,B\in{\rm GL}(n,{\mathbb Z})$ are said to be strongly BF-equivalent when ${\rm BF}_g(A)\cong_R{\rm BF}_g(B)$ for all all $g(x)\in{\mathbb Z}[x]$ with $g(A)$ invertible. When ${\rm BF}_g(A)$ and ${\rm BF}_ g(B)$ are not $R$-module isomorphic for some $g\in{\mathbb Z}[x]$ with $g(A)$ invertible, then the similar $A$ and $B$ fail to be conjugate. So a necessary condition for similar hyperbolic $A$ and $B$ to be conjugate is that $A$ and $B$ are strongly BF-equivalent.

\begin{example}\label{ex1}{\rm The hyperbolic matrices
\[ A = \begin{bmatrix} 0 & 1 & 0 \\ 1 & 0 & 4 \\ 6 & -2 & 23\end{bmatrix}, \ \ B=\begin{bmatrix} 0 & 1 & 12 \\ 1 & 0 & -4 \\ 0 & 2 & 23\end{bmatrix}\]
have the same irreducbile characteristic polynomial of $p(x) = x^3-23x^2+7x-1$. Using the Smith Normal Form, we compute that ${\rm BF}_{x+1}(A)={\mathbb Z}_4\oplus{\mathbb Z}_{8}$ while ${\rm BF}_{x+1}(B) = {\mathbb Z}_2\oplus{\mathbb Z}_{16}$. Thus $A$ and $B$ are not strongly BF-equivalent, and hence are not conjugate.
}\end{example}

The conditions by which two hyperbolic toral automorphisms with a common irreducible characteristic polynomial $p(x)\in{\mathbb Z}[x]$ are strongly BF-equivalent are based in algebraic number theory, where the connection is given by the Latimer-MacDuffee-Taussky Theorem. The field of fractions of $R={\mathbb Z}[x]/(p(x))$ is the algebraic number field ${\mathbb K}={\mathbb Q}[x]/(p(x))$. For a fixed root $\beta$ of $p(x)$, we make the identifications $R={\mathbb Z}[\beta]$ and ${\mathbb K}={\mathbb Q}(\beta)$. For hyperbolic $A\in{\rm GL}(n,{\mathbb Z})$ with characteristic polynomial $p(x)$, choose a row eigenvector $v$ of $A$ for $\beta$, i.e., $vA =\beta v$, such that the components of $v$ form a ${\mathbb Z}$-basis for a fractional ideal $I$ of ${\mathbb Z}[\beta]$, i.e., a finitely generated ${\mathbb Z}[\beta]$-module contained in ${\mathbb K}$. Another fractional idea $J$ of ${\mathbb Z}[\beta]$ is said to be equivalent to $I$ if there is a nonzero $z\in{\mathbb K}$ such $J=zI$. The Latimer-MacDuffee-Taussky Theorem asserts that there is a one-to-one correspondence between the conjugacy classes of matrices in ${\rm GL}(n,{\mathbb Z})$ with characteristic polynomial $p(x)$ and the equivalence classes of the fractional ideals of ${\mathbb Z}[\beta]$. With this one-to-one correspondence, we have that $G_{k,A}\cong_R I/g_k(\beta)I$, where $g_k(x)=x^{k!}-1$, $k\in{\mathbb N}$. The ring of coefficients of $I$ is
\[{\mathfrak O}(I) = \{z\in {\mathbb K}: zI\subset I\};\]
it is contained by the ring of integers ${\mathfrak o}_{\mathbb K}$ of ${\mathbb K}$, and it contains ${\mathbb Z}[\beta]$. A nonzero fractional ideal $I$ of ${\mathbb Z}[\beta]$ is said to be invertible in ${\mathfrak O}(I)$ if the fractional ideal
\[ I^{-1}=\{z\in {\mathbb K}: zI\subset {\mathfrak O}(I)\}\] 
satisfies $II^{-1}={\mathfrak O}(I)$. For $B\in{\rm GL}(n,{\mathbb Z})$ with characteristic polynomial $p(x)$, choose a row eigenvector $w$ for $B$ and $\beta$, i.e., $wB=\beta w$, and associate to $B$ the fractional ideal $J$ of ${\mathbb Z}[\beta]$ generated by $w$. The ring of coefficients of $J$ is an invariant of conjugacy class of $B$ in that if $A$ and $B$ are conjugate, then $I$ and $J$ have the same ring of coefficients. We may choose the associated fractional ideals $I$ and $J$ so that $I\subset J\subset {\mathbb Z}[\beta]$. Suppose that $I$ and $J$ have the same ring of coefficients ${\mathfrak O}$. We say that $I$ and $J$ are weakly equivalent if there are (finitely generated) ${\mathfrak O}$-modules $X$ and $Y$ of ${\mathbb K}$ such that $IX=J$ and $JY=I$. When $I$ and $J$ are weakly equivalent, we have that $X=\{ z\in {\mathbb K}: zI\subset J\}$ and $Y=\{ z\in{\mathbb K}:zJ\subset I\}$, and $XY={\mathfrak O}$, where the latter condition implies that $X$ and $Y$ are invertible fractional ideals of ${\mathfrak O}$. Equivalent fractional ideals in ${\mathbb Z}[\beta]$ are always weakly equivalent. It is known \cite{MS2} that if $A$ and $B$ are strongly BF-equivalent, then ${\mathfrak O}(I)={\mathfrak O}(J)$.

\begin{theorem}\label{SBFsufficient} For hyperbolic $A,B\in{\rm GL}(n,{\mathbb Z})$ with the same irreducible characteristic polynomial $p(x)$, suppose that the associated fractional ideals $I$ and $J$ of ${\mathbb Z}[\beta]$ have the same ring of coefficients ${\mathfrak O}$. If $I$ and $J$ are weakly equivalent, then $A$ and $B$ are strongly BF-equivalent.
\end{theorem}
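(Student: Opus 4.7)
The plan is to reduce strong BF-equivalence to the single identity $I/g(\beta)I \cong_R J/g(\beta)J$ of finite $R$-modules, and then prove this identity by tensoring with the invertible $\mathfrak{O}$-module $X$ and exploiting the finiteness (hence semilocality) of the quotient ring $\mathfrak{O}/g(\beta)\mathfrak{O}$.

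First, I would generalize the already-stated identification $G_{k,A} \cong_R I/g_k(\beta)I$ to an arbitrary $g \in \mathbb{Z}[x]$ with $g(A)$ invertible. The same row-eigenvector identification that turns the right $\mathbb{Z}[A]$-action on $\mathbb{Z}^n$ into multiplication by $\beta$ on $I$ carries $\mathbb{Z}^n g(A)$ onto $g(\beta) I$, giving $\mathrm{BF}_g(A) \cong_R I/g(\beta)I$ and analogously $\mathrm{BF}_g(B) \cong_R J/g(\beta)J$. The theorem therefore reduces to proving $I/g(\beta)I \cong_R J/g(\beta)J$ for every $g \in \mathbb{Z}[x]$ with $g(\beta) \neq 0$.

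Next, the weak-equivalence data $IX = J$, $JY = I$, $XY = \mathfrak{O}$, together with the invertibility of the fractional ideal $X$ of $\mathfrak{O}$, produce a canonical $\mathfrak{O}$-module isomorphism $I \otimes_{\mathfrak{O}} X \to J$, $i \otimes x \mapsto ix$: surjectivity comes from $IX = J$, and injectivity from the exactness of $-\otimes_{\mathfrak{O}} X$ together with $XY = \mathfrak{O}$. Reducing modulo $g(\beta)$ yields
\[ J/g(\beta)J \;\cong\; (I/g(\beta)I) \otimes_{\mathfrak{O}/g(\beta)\mathfrak{O}} (X/g(\beta)X). \]
The crux is then the claim $X/g(\beta)X \cong \mathfrak{O}/g(\beta)\mathfrak{O}$ as $\mathfrak{O}$-modules. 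Invertibility of $X$ over the order $\mathfrak{O}$ makes it locally free of rank one, hence $X/g(\beta)X$ is a finitely generated projective module of constant rank one over $\mathfrak{O}/g(\beta)\mathfrak{O}$. Because $\mathfrak{O}$ is a free $\mathbb{Z}$-module of finite rank and $g(\beta) \neq 0$, the ring $\mathfrak{O}/g(\beta)\mathfrak{O}$ is finite and hence semilocal, and every finitely generated projective module of constant rank over a semilocal commutative ring is free. Substituting $X/g(\beta)X \cong \mathfrak{O}/g(\beta)\mathfrak{O}$ into the displayed isomorphism gives $J/g(\beta)J \cong I/g(\beta)I$ as $\mathfrak{O}$-modules, and therefore as $R$-modules since $R \subseteq \mathfrak{O}$.

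The main difficulty I anticipate is the careful bookkeeping between the $R$- and $\mathfrak{O}$-module structures: verifying that the canonical map $I \otimes_{\mathfrak{O}} X \to J$ really is an isomorphism (this is the point at which $XY = \mathfrak{O}$ is essential, rather than just the product relations $IX = J$ and $JY = I$), and ensuring that the final $\mathfrak{O}$-module isomorphism is faithful to the $R$-module structures on $\mathrm{BF}_g(A)$ and $\mathrm{BF}_g(B)$ via $R \subseteq \mathfrak{O}$. The semilocal freeness step is standard once $\mathfrak{O}/g(\beta)\mathfrak{O}$ is recognized as finite, but it is the pivotal place where the weak-equivalence hypothesis, and not merely the equality $\mathfrak{O}(I) = \mathfrak{O}(J)$, is used.
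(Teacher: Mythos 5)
Your argument is correct, but it proceeds along a genuinely different route from the paper's. The paper also reduces to comparing $I/\alpha I$ and $J/\alpha J$ with $\alpha=g(\beta)$, but it then works with explicit elements: using the invertibility of $X$ and $Y$ it writes $X=\alpha\mathfrak{O}+\gamma\mathfrak{O}$, $Y=a\mathfrak{O}+b\mathfrak{O}$ with $a\alpha+b\gamma=1$, shows that multiplication by the single element $\gamma$ induces an isomorphism $\gamma_*:I/\alpha I\to J/\alpha J$ (injectivity via the Bezout relation and $I\subset J$), and then converts $\gamma$ into an integer matrix $X_g$ with $\gamma v=wX_g$ and $X_gA=BX_g$, so the $R$-isomorphism of $\mathrm{BF}_g(A)$ with $\mathrm{BF}_g(B)$ is realized by an explicit matrix semiconjugacy. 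You instead stay at the level of module theory: the multiplication map $I\otimes_{\mathfrak O}X\to J$ is an isomorphism because $X$ is invertible (flat, locally free of rank one), base change gives $J/\alpha J\cong (I/\alpha I)\otimes_{\mathfrak O/\alpha\mathfrak O}(X/\alpha X)$, and since $\mathfrak O/\alpha\mathfrak O$ is finite (hence semilocal, with trivial Picard group) the rank-one projective $X/\alpha X$ is free, so $J/\alpha J\cong I/\alpha I$ as $\mathfrak O$-modules and a fortiori as $R$-modules. Your version is more conceptual: it avoids the $1.5$-generation of invertible ideals and the Bezout bookkeeping, and it isolates exactly where weak equivalence (through $XY=\mathfrak O$, i.e.\ invertibility of $X$) enters, namely in the flatness of $X$ and the triviality of rank-one projectives over the finite quotient ring. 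The paper's version buys something you do not get for free: a concrete multiplier $\gamma$ and an explicit integer matrix $X_g$ intertwining $A$ and $B$ for each $g$, which is in the spirit of the paper's later computational concerns. Two small points to make explicit if you write this up: $g(A)$ invertible forces $g(\beta)\neq0$ (since $\det g(A)=\prod_i g(\beta_i)$), so $\alpha$ is a non-zero-divisor and $\mathfrak O/\alpha\mathfrak O$ is indeed finite; and injectivity of $I\otimes_{\mathfrak O}X\to J$ is cleanest via flatness of $X$ applied to $I\hookrightarrow\mathbb K$ together with $\mathbb K\otimes_{\mathfrak O}X\cong\mathbb K$.
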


\begin{proof} Set $\alpha=g(\beta)\in{\mathbb Z}[\beta]$ for $g\in{\mathbb Z}[x]$ with $g(A)$ invertible. Because $X$ and $Y$ are invertible fractional ideals of ${\mathfrak O}$, there are constants $\gamma,a,b\in{\mathfrak O}$ such that
\[ X = \alpha {\mathfrak O} + \gamma{\mathfrak O}, \ Y = a{\mathfrak O}+b{\mathfrak O}, \ a\alpha+b\gamma=1.\]
Consider the map from $I$ to $J$ given by multiplication by $\gamma$, i.e., $x\in I$ gets maps to $\gamma x\in J$. It obviously maps $\alpha I$ into $\alpha J$. On the other hand, if $\gamma x\in \alpha J$, say $\gamma x=\alpha z$ for $z\in J$, then
\[ x = a\alpha x+b\gamma x = \alpha(ax+bz).\]
Since $I\subset J\subset {\mathbb Z}[\beta]\subset {\mathfrak O}$, it follows that $x\in \alpha I$. So $\gamma$ induces an isomorphism
\[ \gamma_*:I/\alpha I \to J/\alpha J.\]
Using ${\mathbb Z}$-bases for $I$ and $J$ we get an invertible integer-entry matrix $X_g$ defined by $\gamma v =w X_g$ that satisfies \[ X_g A = BX_g.\]
This semi-conjugacy induces an $R$-module isomorphism between the groups ${\rm BF}_g(A)$ and ${\rm BF}_g(B)$. Since $g\in{\mathbb Z}[x]$ with $g(A)$ invertible is arbitrary, we have strong BF-equivalence of $A$ with $B$.
\end{proof}

Theorem \ref{SBFsufficient} extends a result from \cite{MS2}, which states that for hyperbolic $A,B\in{\rm GL}(n,{\mathbb Z})$ with the same irreducible characteristic polynomial $p(x)$, whose associated fractional ideals $I$ and $J$ of ${\mathbb Z}[\beta]$ satisfy ${\mathfrak O}(I)={\mathfrak O}(J)$ with $I$ and $J$ invertible in ${\mathfrak O}(I)$, the matrices $A$ and $B$ are strongly BF-equivalent. Invertible fractional ideals of ${\mathbb Z}[\beta]$ are always weakly equivalent. One way to guarantee the invertibility of nonzero fractional ideals of ${\mathbb Z}[\beta]$ is when the discriminant of $p(x)$ is square-free; then ${\mathbb Z}[\beta]={\mathfrak o}_{\mathbb K}$ (see \cite{Co}), a Dedekind domain in which every nonzero fractional ideal is invertible (see \cite{SD}). 

\begin{example}\label{ex2}{\rm  Consider the irreducible polynomial $p(x)=x^3-2x^2-8x-1$ whose discriminant of $1957$ is square-free (see Table B4 in \cite{Co}). The two matrices
\[ A = \begin{bmatrix}0 & 1 & 0 \\ 0 & 0 & 1 \\ 1 & 8 & 2 \end{bmatrix}, \ \ B=\begin{bmatrix}-1  & 2 & 0 \\ -1 & 1 & 1 \\ -5 & 9 & 2\end{bmatrix} \]
have $p(x)$ as their characteristic polynomials, where $A$ is the the companion matrix for $p(x)$. Let $\beta$ be a root of $p(x)$, set ${\mathbb K}={\mathbb Q}(\beta)$, and let $I$ and $J$ be fractional ideals in ${\mathbb Z}[\beta]$ associated to $A$ and $B$. Then ${\mathbb Z}[\beta]={\mathfrak o}_{\mathbb K}$ because the discriminant of $p(x)$ is square-free, and so ${\mathfrak O}(I)={\mathfrak O}(J)$ and $I$ and $J$ are invertible in ${\mathfrak o}_{\mathbb K}$. By Theorem \ref{SBFsufficient} and the comments that followed its proof, the matrices $A$ and $B$ are strongly BF-equivalent. But, $I$ and $J$ are inequivalent fractional ideals (see \cite{KKS}), and so $A$ and $B$ are not conjugate.
}\end{example}

\section{Strongly BF-Equivalent Hyperbolic Toral Automorphisms} Unfortunately, for similar hyperbolic $A,B\in{\rm GL}(n,{\mathbb Z})$, strong BF-equivalence of $A$ and $B$ does not imply the conjugacy of $A$ and $B$ (see \cite{MS2}). As illustrated in Example \ref{ex2}, an algebraic obstruction for this is the existence of inequivalent invertible ideals with the same ring of coefficients. However, strong BF-equivalence does relate the profinite $R$-modules $G_A$ and $G_B$.

\begin{theorem}\label{SBFimpliesPsi} Suppose hyperbolic $A,B\in{\rm GL}(n,{\mathbb Z})$ are similar. If $A$ and $B$ are strongly BF-equivalent, then there exists a topological $R$-module isomorphism $\Psi:G_A\to G_B$.
\end{theorem}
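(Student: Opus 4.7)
The plan is to realize $\Psi$ as the inverse limit of a compatible family of $R$-module isomorphisms $\psi_k\colon G_{k,A}\to G_{k,B}$ between the finite quotient modules that make up $G_A$ and $G_B$.

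First, I would apply strong BF-equivalence to the polynomials $g_k(x)=x^{k!}-1$, whose values on $A$ and $B$ are invertible by hyperbolicity. Since $G_{k,A}={\rm BF}_{g_k}(A)$ and $G_{k,B}={\rm BF}_{g_k}(B)$, this produces for each $k\in{\mathbb N}$ at least one $R$-module isomorphism $G_{k,A}\to G_{k,B}$. Let $X_k$ denote the set of all such isomorphisms; it is nonempty and finite, the latter because $G_{k,A}$ is finite.

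Next I would show that any $\psi_k\in X_k$ canonically descends to all lower levels $l\le k$. The kernel of $\varphi_{k,l,A}$ equals $N_{l,A}/N_{k,A}$, and inside $G_{k,A}$ this coincides with the image $(A_k^{l!}-I)(G_{k,A})$ of multiplication by the ring element $A^{l!}-I$. Since $\psi_k$ is $R$-linear, it sends this image to $(B_k^{l!}-I)(G_{k,B})=\ker\varphi_{k,l,B}$, and therefore induces a well-defined $R$-module isomorphism $\rho_{k,l}(\psi_k)\colon G_{l,A}\to G_{l,B}$ satisfying $\rho_{k,l}(\psi_k)\circ\varphi_{k,l,A}=\varphi_{k,l,B}\circ\psi_k$. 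Transitivity $\rho_{k,l}=\rho_{m,l}\circ\rho_{k,m}$ for $l\le m\le k$ is immediate from the universal property of the quotient.

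The main obstacle is choosing the $\psi_k$'s \emph{compatibly}, i.e., so that $\psi_l=\rho_{k,l}(\psi_k)$ whenever $l\le k$; a priori, a given $\psi_l$ need not be the descent of any preselected $\psi_k$. I would resolve this by a standard compactness argument. Endow each finite $X_k$ with the discrete topology; then $\prod_{k\in{\mathbb N}}X_k$ is compact by Tychonoff. For each pair $l\le k$ the subset $C_{k,l}\subset\prod X_k$ of tuples $(\phi_j)$ satisfying $\phi_l=\rho_{k,l}(\phi_k)$ is clopen. Any finite intersection $\bigcap_i C_{k_i,l_i}$ is nonempty: choose $K$ exceeding all $k_i$, pick any $\phi_K\in X_K$, set $\phi_k=\rho_{K,k}(\phi_K)$ for $k\le K$ (transitivity then guarantees membership in every $C_{k_i,l_i}$), and fill in the remaining coordinates arbitrarily. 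By the finite intersection property the full intersection is nonempty, yielding a compatible family $\{\psi_k\}_{k\in{\mathbb N}}$.

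Finally, the compatible family induces, via the universal property of the inverse limit, a continuous $R$-module homomorphism $\Psi\colon G_A\to G_B$ given coordinatewise by $\Psi(\{[m_k]_{k,A}\}_k)=\{\psi_k([m_k]_{k,A})\}_k$. The family $\{\psi_k^{-1}\}$ is likewise compatible and produces the coordinatewise inverse, so $\Psi$ is bijective. Because $G_A$ and $G_B$ are compact Hausdorff, the continuous bijection $\Psi$ automatically has continuous inverse, and is therefore a topological $R$-module isomorphism, as required.
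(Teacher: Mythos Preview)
Your proof is correct, and it takes a genuinely different route from the paper's.

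The paper does not work with the original inverse systems $\{G_{k,A}\}$ and $\{G_{k,B}\}$ directly. Instead it adapts a general result from Ribes--Zalesskii (Theorem~3.2.7) on finitely generated profinite groups with the same set of finite quotients: it first argues that $G_A$ and $G_B$ have the same finite $R$-module quotients, then passes to auxiliary cofinal subsystems $\{G_A/U_k\}$ and $\{G_B/V_k\}$ where $U_k$ (resp.\ $V_k$) is the intersection of all open $R$-submodules of index at most $k$, proves $G_A/U_k\cong_R G_B/V_k$, and only then runs the inverse-limit compactness argument on the sets $Z_k$ of isomorphisms between these auxiliary quotients.

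Your key observation, which the paper does not exploit, is that in this particular inverse system the kernels $\ker\varphi_{k,l,A}=(A_k^{l!}-I)(G_{k,A})$ are themselves the images of a fixed ring element $x^{l!}-1\in R$, and are therefore automatically matched by \emph{any} $R$-module isomorphism $\psi_k$. This lets you build the restriction maps $\rho_{k,l}\colon X_k\to X_l$ directly on the given system and run the compactness argument there, bypassing the detour through $U_k,V_k$ entirely. Your argument is shorter and more elementary for this specific setup; the paper's argument is more portable, since it would still go through for inverse systems whose transition kernels are not expressible via the $R$-action.
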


\begin{proof} Under the hypotheses on $A$ and $B$, we prove $G_A\cong_R G_B$ through the open $R$-submodules of $G_A$ and $G_B$. Let ${\mathcal U}_A$ be the collection of all open $R$-submodules of $G_A$, and ${\mathcal U}_B$ the collection of all open $R$-submodules of $G_B$.

The first claim is that $G_A$ and $G_B$ have the same finite quotients through their respective open $R$-submodules:
\[ \{ G_A/U:U\in{\mathcal U}_A\} = \{ G_B/V:V\in {\mathcal U}_B\},\]
where we understand equality in terms of $R$-module isomorphisms. Let $H=G_A/U$ for $U\in {\mathcal U}_A$. Since $\{ {\rm ker}(\varphi_{k,A}):k\in{\mathbb N} \}$ is a fundamental set of open $R$-submodule neighbourhoods of $0$, there is a $k\in{\mathbb N}$ such that ${\rm ker}(\varphi_{k,A})$ is an $R$-submodule of $U$. Thus $U/{\rm ker}(\varphi_{k,A})$ is an $R$-submodule of $G_A/{\rm ker}(\varphi_{k,A})$ and $(G_A/{\rm ker}(\varphi_{k,A}))/(U/{\rm ker}(\varphi_{k,A}))\cong_R G_A/U=H$, where $G_A/{\rm ker}(\varphi_{k,A})\cong_R G_{k,A}$. By the assumption of strong BF-equivalence of $A$ and $B$, we have that $G_{k,A}\cong_R G_{k,B}\cong_R G_B/{\rm ker}(\varphi_{k,B})$. Then there is an open $R$-submodule $V$ of $G_B$ containing ${\rm ker}(\varphi_{k,B})$ such that $U/{\rm ker}(\varphi_{k,A})\cong_R V/{\rm ker}(\varphi_{k,B})$. This implies that $H=G_A/U\cong_R G_B/V$ for $V\in {\mathcal U}_B$. Reversing the roles of $A$ and $B$ in this argument gives the other inclusion, and thus the first claim.

From this point on, we adapt the argument from (\cite{RZ}, Theorem 3.2.7 on pp.\,88-89). For each $k\in{\mathbb N}$ set
\[ U_k = \cap\{ U\in {\mathcal U}_A:[G_A,U]\leq k\}, \ \ V_k = \cap \{ V\in {\mathcal U}_B:[G_B:V]\leq k\}.\]
Since $G_A$ and $G_B$ are both finitely generated, there are for each $k\in{\mathbb N}$ only finitely many $U\in {\mathcal U}_A$ and $V\in {\mathcal U}_B$ such that $[G_A:U]\leq k$ and $[G_B:V]\leq k$. Thus $U_k\in {\mathcal U}_A$ and $V_k\in {\mathcal U}_B$ for all $k\in{\mathbb N}$. By the first claim, for each $k\in{\mathbb N}$ there is a $K\in {\mathcal U}_A$ such that $G_A/K\cong_R G_B/V_k$.

The second claim is that this $K$ is an $R$-submodule of $U_k$. We show this by setting up an association between certain open $R$-submodules of $G_A$ and $G_B$. Take any $V\in {\mathcal U}_B$ with $[G_B:V]\leq k$. Since $V_k$ is an open $R$-submodule of $V$, we have that $V/V_k$ is an $R$-submodule of $G_B/V_k$ with
\[ (G_B/V_k)/(V/V_k)\cong_R G_B/V.\]
Since $G_A/K\cong_R G_B/V_k$, there is by the first claim, an $U\in {\mathcal U}_A$ with $K$ an $R$-submodule of $U$ such that
\[ (G_B/V_k)/(V/V_k)\cong_R (G_A/K)/(U/K)\cong_R G_A/U.\]
This implies that  $G_A/U\cong_R G_B/V$, and so $[G_A:U]=[G_B:V]\leq k$. Thus associated to every $V\in{\mathcal U}_B$ with $[G_B:V]\leq k$, there is an $U\in{\mathcal U}_A$ that has $K$ as an $R$-submodule and satisfies $[G_A:U]\leq k$, i.e., $U_k$ is an $R$-submodule of $U$. On the other hand, for any $U\in{\mathcal U}_A$ with $[G_A:U]\leq k$ and $K$ an $R$-submodule of $U$, we have $U/K$ is an $R$-submodule of $G_A/K$ with
\[ (G_A/K)/(U/K)\cong_R G_A/U.\] 
Since $G_A/K\cong_R G_B/V_k$, there is by the first claim a $V\in {\mathcal U}_B$ with $V_k$ an $R$-submodule of $V$ such that
\[ (G_A/K)/(U/K)\cong_R (G_2/V_k)/(V/V_k)\cong_R G_B/V.\]
This implies that $G_A/U\cong_R G_B/V$, and so $[G_B:V]=[G_A:U]\leq k$. Thus associated to each $U\in {\mathcal U}_A$ with $[G_A:U]\leq k$ and $K$ an $R$-submodule of $U$, is a $V\in{\mathcal U}_B$ with $[G_B:V]\leq k$. These two arguments account for all $U\in{\mathcal U}_A$ with $K$ an $R$-submodule of $U$ such that  $[G_A:U]\leq k$. Thus we obtain that $K$ is an $R$-submodule of $U_k$, giving the second claim.

The third claim is that $G_A/U_k\cong_R G_B/V_k$ for all $k\in{\mathbb N}$. Since $K$ is an $R$-submodule of $U_k$ by the second claim, it follows that $G_A/U_k$ is an $R$-submodule of $G_A/K$. Since $G_A/K\cong_R G_B/V_k$, we have that $\vert G_A/U_n\vert \leq \vert G_B/V_k\vert$ for all $k\in{\mathbb N}$. Reversing the roles gives by a similar argument that $\vert G_A/U_k\vert \geq \vert G_B/V_k\vert$, this giving $\vert G_A/U_k\vert=\vert G_B/V_k\vert $ for all $k\in{\mathbb N}$. Since $G_A/U_k$ is $R$-module isomorphically an $R$-submodule of $G_B/V_k$, the finiteness of $G_A/U_k$ and $G_B/V_k$ implies that $G_A/U_k\cong_R G_B/V_k$ for all $k\in{\mathbb N}$. This gives the third claim.

For each $k\in{\mathbb N}$, let $Z_k$ be the set of all $R$-module isomorphisms from $G_A/U_k$ to $G_B/V_k$. By the third claim, $Z_k\ne\emptyset$ for all $k\in{\mathbb N}$. Let $\sigma_k$ denote an element of $Z_k$.

The fourth claim is that each $R$-module isomorphism $\sigma_{k+1}\in Z_{k+1}$ induces an $R$-module isomorphism $\sigma_k\in Z_k$. Since $U_{k+1}$ is an $R$-submodule of $U_k$ and $V_{k+1}$ is an $R$-submodule of $V_k$, we have that $U_k/U_{k+1}$ is an $R$-submodule of $G_A/U_{k+1}$ and $V_k/V_{k+1}$ is an $R$-submodule of $G_B/V_{k+1}$. If $\sigma_{k+1}(U_k/U_{k+1}) = V_k/V_{k+1}$, then
\begin{align*} \sigma_{k+1}(G_A/U_k) 
& = \sigma_{k+1}\big( (G_A/U_{k+1})/(U_k/U_{k_1})\big) \\
& \cong_R \sigma_{k+1}(G_A/U_{k+1})/\sigma_{k+1}(U_k/U_{k+1}) \\
& = (G_B/V_{k+1})/(V_k/V_{k+1}) \\
& \cong_R G_B/V_k,
\end{align*}
and so $\sigma_{k+1}$ induces an $R$-module isomorphism $\sigma_k$ between $G_A/U_k$ and $G_B/V_k$. It remains to check that $\sigma_{k+1}(U_k/U_{k+1}) = V_k/V_{k+1}$. Since $\sigma_{k+1}$ is an $R$-module isomorphism from $G_A/U_{k+1}$ to $G_B/V_{k+1}$, there is an open $R$-submodule $L$ of $G_A$ containing $U_{k+1}$ such that $\sigma_{k+1}(L/U_{k+1}) = V_k/V_{k+1}$. Thus
\begin{align*}
G_A/L & \cong_R (G_A/U_{k+1})/(L/U_{k+1}) \\
& \cong_R (G_B/V_{k+1})/(V_k/V_{k+1}) \\
& \cong_R G_B/V_k.
\end{align*}
By the argument used in the second claim, we have that $L$ is an $R$-submodule of $U_k$, so that $G_A/U_k$ is an $R$-submodule of $G_A/L\cong_R G_B/V_k$ with $\vert G_A/U_k\vert=\vert G_B/V_k\vert$. Hence $\vert G_A/U_k\vert = \vert G_A/L\vert$, and so
\[ \vert G_A/L\vert= \vert G_A/U_k\vert\,\vert U_k:L\vert\]
implies that $\vert U_k/L\vert =1$, i.e., $L=U_k$. Thus we have that $\sigma(U_k/U_{k+1}) = V_k/V_{k+1}$, giving the fourth claim.

Denote by $\xi_{k+1,k}:Z_{k+1}\to Z_k$ the map defined by $\sigma_{k+1}\to\sigma_k$ as given by the fourth claim. Then $\{ Z_k,\xi_{k+1,k}\}$ is an inverse system of nonempty sets. Hence (by Proposition 1.1.4 in \cite{RZ}), there is an element $(\sigma_k)_{k=1}^\infty$ in the inverse limit
\[ Z =\lim_{\longleftarrow}{}_{k\in{\mathbb N}} Z_k.\]

The sets $\{ G_A/U_k\}_{k=1}^\infty$ and $\{ G_B/V_k\}_{k=1}^\infty$ are naturally surjective inverse systems of finite $R$-modules.  Let $\vartheta_{k+1,k,A}:G_A/U_{k+1}\to G_A/U_k$ and $\vartheta_{k+1,k,B}:G_B/V_{k+1}\to G_B/V_k$ be the canonical epimorphisms, and let
\[\vartheta_{k,A}:\lim_{\longleftarrow} {}_{l\in{\mathbb N}} G_A/U_l\to G_A/U_k, {\rm \ and\ }\vartheta_{k,B}:\lim_{\longleftarrow} {}_{l\in{\mathbb N}} G_B/U_l\to G_B/V_k\]
be the compatible continuous canonical epimorphisms, i.e., $\varphi_{k+1,k,A}\varphi_{k+1,A}=\varphi_{k,A}$ and $\varphi_{k_1,k,B}\varphi_{k+1,B}=\varphi_{k,B}$. The element $(\sigma_k)_{k=1}^\infty\in Z$ is an $R$-module isomorphism between the inverse systems $\{ G_A/U_k\}_{k=1}^\infty$ and $\{ G_B/V_k\}_{k=1}^\infty$, i.e., $\sigma_k\vartheta_{k+1,k,A}= \vartheta_{k+1,k,B}\sigma_{k+1}$ for all $k\in{\mathbb N}$. The continuous epimorphisms $\sigma_k\vartheta_{k,A}$ are compatible, i.e., they satisfy
\[ \vartheta_{k+1,k,B}(\sigma_{k+1}\vartheta_{k+1,A}) = \sigma_k\vartheta_{k+1,k,A}\vartheta_{k+1,A} = \sigma_k \vartheta_{k,A}, \ k\in{\mathbb N},\]
and so induce a topological $R$-module isomorphism
\[ \sigma:\lim_{\longleftarrow} {}_{k\in{\mathbb N}} G_A/U_k \to \lim_{\longleftarrow} {}_{k\in{\mathbb N}} G_B/V_k.\]
The inverse systems $\{ G_A/U_k\}_{k=1}^\infty$ and $\{ G_B/V_k\}_{k=1}^\infty$ are cofinal subsystems of the surjective inverse systems $\{ G_A/U:U\in{\mathcal U}_A\}$ and $\{ G_B/V:V\in{\mathcal U}_B\}$, and so
\[ \lim_{\longleftarrow}{}_{U\in{\mathcal U}_A} G_A/U \cong_R \lim_{\longleftarrow}{}_{k\in{\mathbb N}} G_A/U_k \cong_R  \lim_{\longleftarrow}{}_{k\in{\mathbb N}} G_B/V_k \cong_R \lim_{\longleftarrow}{}_{V\in{\mathcal U}_B} G_B/V,\]
where each of the $R$-module isomorphisms is a topological $R$-module isomorphism.  Here, the first and last profinite $R$-modules are topologically $R$-module isomorphic to $G_A$ and $G_B$ respectively. Therefore, there is a topological $R$-module isomorphism between $G_A$ and $G_B$.
\end{proof}

\section{Characterization of Conjugacy} For similar $A,B\in{\rm GL}(n,{\mathbb Z})$, the profinite $R$-modules $G_A$ and $G_B$ are invariants of conjugacy. Furthermore, they provide means to characterize when $A$ and $B$ are conjugate or not in terms of the embedded copies of the $R$-modules $H_A^\prime$ and $H_B^\prime$ in $G_A$ and $G_B$ respectively. Specifically, for a topological $R$-module isomorphism $\Psi:G_A\to G_B$, it is how the image $\Psi(\iota_A({\mathbb Z}^n))$ intersects $\iota_B({\mathbb Z}^n)$.

\begin{lemma}\label{isomorphism} Suppose $A,B\in{\rm GL}(n,{\mathbb Z}^n)$ are similar and hyperbolic. If there are $R$-module isomorphisms $\Psi_k:G_{k,A}\to G_{k,B}$ such that
\[ \varphi_{k,l,B}\Psi_k = \Psi_l \varphi_{k,l,A}\] 
for all $l\leq k$, then the collection of $R$-module isomorphisms $\Psi_k$ induces a topological $R$-module isomorphism $\Psi:G_A\to G_B$.
\end{lemma}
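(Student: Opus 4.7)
The plan is to define $\Psi$ componentwise by $\Psi(\{[m_k]_{k,A}\}_{k\in\mathbb{N}}) = \{\Psi_k([m_k]_{k,A})\}_{k\in\mathbb{N}}$ and then verify that it inherits all the desired properties from the $\Psi_k$. The compatibility hypothesis $\varphi_{k,l,B}\Psi_k = \Psi_l\varphi_{k,l,A}$ for $l\leq k$ is precisely what is needed to invoke the universal property of the inverse limit, so most of the work reduces to bookkeeping.

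First I would verify that $\Psi$ is well-defined, i.e.\ that $\{\Psi_k([m_k]_{k,A})\}_{k\in\mathbb{N}}$ actually lies in $G_B$. Given a coherent tuple $\{[m_k]_{k,A}\}_{k\in\mathbb{N}}$ in $G_A$, we have $\varphi_{k,l,A}([m_k]_{k,A})=[m_l]_{l,A}$ whenever $l\leq k$. Applying $\Psi_l$ and using the compatibility relation gives $\varphi_{k,l,B}\Psi_k([m_k]_{k,A}) = \Psi_l\varphi_{k,l,A}([m_k]_{k,A}) = \Psi_l([m_l]_{l,A})$, which is exactly the coherence condition for the inverse system defining $G_B$. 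Equivalently, and more cleanly, the collection of continuous $R$-module homomorphisms $\Psi_k\varphi_{k,A}:G_A\to G_{k,B}$ is compatible with the transition maps $\varphi_{k,l,B}$, so the universal property of the inverse limit $G_B$ produces a unique continuous $R$-module homomorphism $\Psi:G_A\to G_B$ satisfying $\varphi_{k,B}\Psi=\Psi_k\varphi_{k,A}$ for all $k\in\mathbb{N}$. That $\Psi$ is an $R$-module homomorphism is automatic since each $\Psi_k$ and each $\varphi_{k,A}$ is one.

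To produce the inverse, I would note that since each $\Psi_k$ is an $R$-module isomorphism, applying $\Psi_l^{-1}\varphi_{k,l,B}$ and $\varphi_{k,l,A}\Psi_k^{-1}$ to the identity $\Psi_k=\Psi_k$ yields the dual compatibility $\varphi_{k,l,A}\Psi_k^{-1} = \Psi_l^{-1}\varphi_{k,l,B}$. The same universal property then produces a continuous $R$-module homomorphism $\Phi:G_B\to G_A$ with $\varphi_{k,A}\Phi=\Psi_k^{-1}\varphi_{k,B}$. Composing, we get $\varphi_{k,A}\Phi\Psi = \Psi_k^{-1}\varphi_{k,B}\Psi = \Psi_k^{-1}\Psi_k\varphi_{k,A} = \varphi_{k,A}$, and uniqueness in the universal property (or equivalently the fact that $\bigcap_k\ker\varphi_{k,A}=\{0\}$) forces $\Phi\Psi=\mathrm{id}_{G_A}$; the symmetric argument gives $\Psi\Phi=\mathrm{id}_{G_B}$.

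Finally, since $\Psi$ is a continuous bijection from the compact space $G_A$ to the Hausdorff space $G_B$, it is automatically a homeomorphism, and hence a topological $R$-module isomorphism. I do not anticipate a real obstacle here; the only mildly delicate point is the derivation of the dual compatibility relation for the $\Psi_k^{-1}$, which is an elementary manipulation but essential for producing the two-sided inverse directly from the universal property rather than by an ad hoc componentwise argument.
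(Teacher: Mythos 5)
Your proposal is correct and follows essentially the same route as the paper: both define $\Psi$ through the compatible maps $\Psi_k\varphi_{k,A}$ and the universal property of the inverse limit, and conclude it is a homeomorphism from continuity, compactness of $G_A$, and Hausdorffness of $G_B$. The only cosmetic differences are that you construct the two-sided inverse explicitly from the dual compatibility $\varphi_{k,l,A}\Psi_k^{-1}=\Psi_l^{-1}\varphi_{k,l,B}$, where the paper deduces bijectivity directly from the components being isomorphisms, and that you treat $R$-linearity as automatic from the universal property, whereas the paper verifies the intertwining $\Gamma_B\Psi=\Psi\Gamma_A$ by an explicit componentwise computation using $\Psi_kA_k=B_k\Psi_k$.
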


\begin{proof} The isomorphisms $\Psi_k$ are homeomorphisms because of the discrete topologies on $G_{k,A}$ and on $G_{k,B}$. Assuming that $\varphi_{k,l,B}\Psi_k = \Psi_l \varphi_{k,l,A}$ for all $l\leq k$ implies that the $\Psi_k$ are components of a morphism from the inverse system $\{G_{k,A},\varphi_{k,l,A},{\mathbb N}\}$ to the inverse system $\{ G_{k,B},\varphi_{k,l,B},{\mathbb N}\}$. The maps $\Psi_k \varphi_{k,A}:G_A\to G_{k,B}$ form a collection of compatible maps: for all $l\leq k$, there holds
\begin{align*}
\varphi_{k,l,B} (\Psi_k \varphi_{k,A})\big(\{[m_j]_{j,A}\}_{j\in{\mathbb N}}\big)
& = \varphi_{k,l,B}\Psi_k([m_k]_{k,A}) \\
& = \Psi_l\varphi_{k,l,A}([m_k]_{k,A}) \\
& = \Psi_l([m_k]_{l,A}) \\
& = \Psi_l([m_l]_{l,A}) \\
& = \Psi_l\varphi_{l,A}\big(\{[m_j]_{j,A}\}_{j\in{\mathbb N}}\big).
\end{align*}
The compatible maps $\Psi_k\varphi_{k,A}$ induce a continuous homomorphism $\Psi:G_A\to G_B$ that satisfies $\varphi_{k,B} \Psi = \Psi_k \varphi_{k,A}$ for all $k\in{\mathbb N}$. The map $\Psi$ is given by
\[ \Psi(\{[m_k]_{k,A}\}_{k\in{\mathbb N}}) = \{\Psi_k([m_k]_{k,A})\}_{k\in{\mathbb N}},\]
and is a topological isomorphism because each $\Psi_k$ is a topological isomorphism and because $G_A$ is compact and $G_B$ is Hausdorff.

Since each $\Psi_k$ is an $R$-module isomorphism, we have $\Psi_k  A_k = B_k \Psi_k$ for all $k\in{\mathbb N}$. From this we get
\begin{align*}
\Gamma_B \Psi\big(\{[m_k]_{k,A}\}_{k\in{\mathbb N}}\big)
& = \Gamma_B\big(\Psi_k (\{[m_k]_{k,A}\}_{k\in{\mathbb N}})\big)\\
& = \{B_k \Psi_k  ([m_k]_{k,A})\}_{k\in{\mathbb N}} \\
& = \{\Psi_k A_k  ([m_k]_{k,A})\}_{k\in{\mathbb N}} \\
& = \Psi\big(\{A_k([m_k]_{k,A})\}_{k\in{\mathbb N}}\big) \\
& = \Psi\Gamma_A(\{[m_k]_{k,A}\}_{k\in{\mathbb N}}).
\end{align*}
Thus, $\Psi$ is a topological $R$-module isomorphism.
\end{proof}

\begin{theorem}\label{characterization} For similar hyperbolic $A,B\in{\rm GL}(n,{\mathbb Z})$, the following are equivalent:
\begin{enumerate}
\item[(a)] $A$ is conjugate to $B$,
\item[(b)] there exists a topological $R$-module isomorphism $\Psi:G_A\to G_B$ such that $\Psi\big( \iota_A({\mathbb Z}^n)\big) = \iota_B({\mathbb Z}^n)$, and 
\item[(c)] there exists a topological $R$-module isomorphism $\Psi:G_A\to G_B$ such that  $\Psi\big(\iota_A({\mathbb Z}^n)\big)\cap \iota_B({\mathbb Z}^n)$ is $R$-module isomorphic to the right ${\mathbb Z}[B]$-module ${\mathbb Z}^n$.
\end{enumerate}
\end{theorem}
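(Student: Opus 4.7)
The plan is to prove the three equivalences cyclically: $(a)\Rightarrow(b)\Rightarrow(c)\Rightarrow(a)$.

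For $(a)\Rightarrow(b)$, I would start from a conjugating matrix $C\in\mathrm{GL}(n,\mathbb{Z})$ with $AC=CB$ and define, for each $k\in\mathbb{N}$, the map $\Psi_k\colon G_{k,A}\to G_{k,B}$ by $\Psi_k([m]_{k,A})=[mC]_{k,B}$. The identity $(A^{k!}-I)C=C(B^{k!}-I)$ gives well-definedness, invertibility of $C$ in $\mathrm{GL}(n,\mathbb{Z})$ gives bijectivity, $AC=CB$ gives the $R$-linearity $\Psi_k A_k=B_k\Psi_k$, and compatibility with the $\varphi_{k,l,\cdot}$ is direct. Lemma \ref{isomorphism} then produces the required $\Psi\colon G_A\to G_B$; since $\Psi\iota_A(m)=\iota_B(mC)$ by construction, we obtain $\Psi(\iota_A(\mathbb{Z}^n))=\iota_B(\mathbb{Z}^n C)=\iota_B(\mathbb{Z}^n)$. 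The implication $(b)\Rightarrow(c)$ is immediate: $\iota_B$ is a monomorphism by Lemma \ref{NA}, so $\iota_B(\mathbb{Z}^n)\cong_R\mathbb{Z}^n$ as a right $\mathbb{Z}[B]$-module, and under $(b)$ the intersection in $(c)$ equals $\iota_B(\mathbb{Z}^n)$.

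For $(c)\Rightarrow(a)$, write $\tilde M=\Psi(\iota_A(\mathbb{Z}^n))\cap\iota_B(\mathbb{Z}^n)$, $M_B=\iota_B^{-1}(\tilde M)$, and $M_A=\iota_A^{-1}(\Psi^{-1}(\tilde M))$; these are finite-index $R$-submodules of $\mathbb{Z}^n$. The composition $\mu=\iota_B^{-1}\Psi\iota_A|_{M_A}\colon M_A\to M_B$ is an $R$-module isomorphism intertwining the $A$- and $B$-actions, and the hypothesis yields $M_B\cong_{\mathbb{Z}[B]}\mathbb{Z}^n$, so $M_A$ is $R$-module isomorphic to the right $\mathbb{Z}[B]$-module $\mathbb{Z}^n$. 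Because $\Psi$ is a homeomorphism sending the closure of $\iota_A(M_A)$ in $G_A$ onto the closure of $\iota_B(M_B)$ in $G_B$, a profinite index computation (using compactness of $G_A$ and $G_B$) forces $[\mathbb{Z}^n:M_A]=[\mathbb{Z}^n:M_B]$. Choosing $\mathbb{Z}$-bases of $M_A$ and $M_B$ adapted to these two isomorphisms produces integer matrices $P'$ and $Q'$ with $P'A=BP'$, $Q'B=BQ'$, and $|\det P'|=|\det Q'|$, so that the candidate matrix $C=(Q')^{-1}P'$ satisfies $CA=BC$ with $\det C=\pm 1$.

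The main obstacle will be showing that $P'$ and $Q'$ can be chosen so that $(Q')^{-1}P'$ actually lies in $\mathrm{GL}(n,\mathbb{Z})$ rather than merely in $\mathrm{GL}(n,\mathbb{Q})$. My plan is to exploit the flexibility in choosing $P'$ (up to right multiplication by integer matrices commuting with $A$) and $Q'$ (up to left multiplication by integer matrices commuting with $B$), together with the Latimer--MacDuffee--Taussky correspondence identifying integer conjugacy classes of $A$ and $B$ with ideal classes in the associated number ring, to promote the rational intertwiner to an integer one. The decisive use of the hypothesis that $\tilde M$ is $R$-module isomorphic to the full module $\mathbb{Z}^n$ (rather than merely embedding into it) enters at this step, and this is the technically hardest part of the proof.
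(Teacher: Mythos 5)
Your treatment of (a)$\Rightarrow$(b) and (b)$\Rightarrow$(c) matches the paper's proof (the level maps $\Psi_k([m]_{k,A})=[mC]_{k,B}$, Lemma \ref{isomorphism}, and the monomorphy of $\iota_B$), but your (c)$\Rightarrow$(a) has a genuine gap, and it is precisely the step you defer. What your reduction actually establishes is: $M_A\leq{\mathbb Z}^n$ is $A$-invariant with $(M_A,A)\cong_R({\mathbb Z}^n,B)$, $M_B\leq{\mathbb Z}^n$ is $B$-invariant with $(M_B,B)\cong_R({\mathbb Z}^n,B)$, and $[{\mathbb Z}^n:M_A]=[{\mathbb Z}^n:M_B]$, hence a ${\mathbb Q}$-intertwiner $C=(Q')^{-1}P'$ of determinant $\pm1$. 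That data is strictly weaker than integral conjugacy: for strongly BF-equivalent, non-conjugate pairs such as Example \ref{ex2}, $A$ does act on suitable finite-index invariant sublattices as a copy of $({\mathbb Z}^n,B)$ and the indices can be matched, yet $A$ and $B$ are not conjugate --- a determinant-$\pm1$ rational intertwiner is exactly the kind of invariant that fails to separate ideal classes within a genus. So no amount of ``flexibility'' in the choices of $P'$ and $Q'$ can finish from that intermediate data alone; the hypothesis (c) must be used in an essentially different way. In addition, your proposed rescue via the Latimer--MacDuffee--Taussky correspondence is not available at the stated level of generality: Theorem \ref{characterization} makes no irreducibility assumption (irreducibility enters only in Theorem \ref{finiteintersection}), whereas LMT requires an irreducible characteristic polynomial. (A smaller point: your index computation via closures needs the fact that every finite-index subgroup of ${\mathbb Z}^n$ is open in the ${\mathcal N}_B$-topology, which holds because $B^{k!}\equiv I$ modulo any given integer for large $k$; this should be said.)

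The paper avoids the rational-to-integral promotion problem entirely by using the isomorphism in (c) as the conjugating object itself rather than as a source of numerical invariants. Given an $R$-module isomorphism $h:{\mathbb Z}^n\to\Delta_\Psi$ (right ${\mathbb Z}[B]$-structure on the source, $\Gamma_B$-structure on $\Delta_\Psi$), the paper forms $h^{-1}\Psi\iota_A$, an additive bijection of ${\mathbb Z}^n$ intertwining the right $A$-action with the right $B$-action; any such map is given by some $C\in{\rm GL}(n,{\mathbb Z})$, and then $\Psi\Gamma_A=\Gamma_B\Psi$ yields $mAC=mCB$ for all $m$, i.e., $AC=CB$ (alternatively one concludes from $e_j(CB-AC)\in N_{k,B}$ for all $k$ together with $\bigcap_k N_{k,B}=\{0\}$ of Lemma \ref{NA}). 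Note that making this composite globally defined uses that $\Delta_\Psi$ is the full intersection $\Psi(\iota_A({\mathbb Z}^n))\cap\iota_B({\mathbb Z}^n)$, i.e., the maximality of $\Delta_\Psi$ --- information your argument discards the moment you pass to the indices of $M_A$ and $M_B$. If you want to keep your lattice-theoretic framing, the repair is to feed $h$ (not just $[{\mathbb Z}^n:M_B]$) back into the construction of the intertwiner, which is exactly the paper's route.
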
 

\begin{proof} (a)$\Rightarrow$(b) Suppose $A$ and $B$ are conjugate, i.e., there is $C\in{\rm GL}(n,{\mathbb Z})$ such that $AC = CB$. For $k\in{\mathbb N}$ define maps $\Psi_k:G_{k,A}\to G_{k,B}$ by
\[ \Psi_k([m]_{k,A}) = [mC]_{k,B}.\]
We show that the maps $\Psi_k$ are well-defined. For $[m]_{k,A} = [m^\prime]_{k,A}$ we have  $m-m^\prime \in {\mathbb Z}^n(A^{k!}-I)$, and so
\[ mC - m^\prime C = (m-m^\prime)C \in {\mathbb Z}^n(A^{k!}-I)C = {\mathbb Z}^nC(B^{k!}-I) = {\mathbb Z}^n(B^{k!} - I).\]
Hence $\Psi_k([m]_{k,A}) = [mC]_{k,B} = [m^\prime C]_{k,B} = \Psi_k([m^\prime]_{k,A})$.

We show that each $\Psi_k$ is a topological $R$-module isomorphism. For $[m]_{k,A}$ and $ [m^\prime]_{k,A}$ in $G_{k,A}$, we have
\begin{align*}
\Psi_k([m]_{k,A} + [m^\prime]_{k,A})
& = \Psi_k([m+m^\prime]_{k,A}) \\
& = [(m+m^\prime)C]_{k,B} \\
& = [mC + m^\prime C]_{k,B} \\
& = [mC]_{k,B} + [m^\prime C]_{k,B} \\
& = \Psi_k([m]_{k,A}) + \Psi_k([m^\prime]_{k,A}),
\end{align*}
and so each $\Psi_k$ is a homomorphism. Suppose that $\Psi_k([m]_{k,A}) = \Psi_k([m^\prime]_{k,A})$. Then $[mC]_{k,B} = [m^\prime C]_{k,B}$, i.e., $(m-m^\prime)C = mC-m^\prime C\in {\mathbb Z}^n (B^{k!}-I)$. Hence
\[ m-m^\prime \in {\mathbb Z}^n(B^{k!}-I)C^{-1} = {\mathbb Z}^nC^{-1}(B^{k!}-I) = {\mathbb Z}^n(A^{k!}-I).\]
This means that $[m]_{k,A} = [m^\prime]_{k,A}$, and so each $\Psi_k$ is injective. For $[m]_{k,B}\in G_{k,B}$ the element $[m C^{-1}]_{k,A}$ satisfies $\Psi_k([m C^{-1}]_{k,A}) = [m C^{-1}C]_{k,B} = [m]_{k,B}$, and so each $\Psi_k$ is surjective. Since $\Psi_k$ is a continuous bijection, $G_{k,A}$ is compact, and $G_{k,B}$ is Hausdorff, each $\Psi_k$ is a homeomorphism. The maps $\Psi_k$ satisfy
\begin{align*}
\Psi_kA_k ([m]_{k,A})
& = \Psi_k([mA]_{k,A}) \\
& = [mAC]_{k,B} \\
& = [mCB]_{k,B} \\
& = B_k([mC]_{k,B}) \\
& = B_k\Psi_k([m]_{k,A}).
\end{align*}

We show that the $\Phi_k$ induce a topological $R$-module isomorphism from $G_A$ to $G_B$. For $l\leq k$, the topological $R$-module isomorphisms $\Psi_k$ satisfy
\begin{align*}
\varphi_{k,l,B}\Psi_k([m]_{k,A}) 
& = \varphi_{k,l,B}([mC]_{k,B}) \\
& = [mC]_{l,B} \\
& = \Psi_l([m]_{l,A}) \\
& = \Psi_l \varphi_{k,l,A}([m]_{k,A}).
\end{align*} 
By Lemma \ref{isomorphism}, the maps $\Psi_k$ therefore induce a topological $R$-module isomorphism $\Psi:G_A\to G_B$ defined by
\[ \Psi\big(\{[m_k]_{k,A}\}_{k\in{\mathbb N}}\big) = \{\Psi_k([m_k]_{k,A})\}_{k\in{\mathbb N}} = \{[m_kC]_{k,B}\}_{k\in{\mathbb N}}.\]

We show that this topological $R$-module isomorphism $\Psi$ satisfies $\Psi\big(\iota_A({\mathbb Z}^n)\big)=\iota_B({\mathbb Z}^n)$. For $m\in{\mathbb Z}^n$, we have
\[ \Psi\big(\iota_A(m)\big) = \Psi\big(\{[m]_{k,A}\}_{k\in{\mathbb N}}\big) = \{[mC]_{k,B}\}_{k\in{\mathbb N}},\]
which implies that  $\Psi\big(\iota_A({\mathbb Z}^n)\big)\subset \iota_B({\mathbb Z}^n)$. The invertibility of $C$ implies the opposite inclusion, so that $\Psi(\iota_A({\mathbb Z}^n)\big) = \iota_B({\mathbb Z}^n)$.

(b)$\Rightarrow$(c) This follows because $\iota_B$ is an $R$-module monomorphism.

(c)$\Rightarrow$(a). Let $\Delta_\Psi = \Psi\big(\iota_A({\mathbb Z}^n)\big)\cap \iota_B({\mathbb Z}^n)$, and suppose there is a $R$-module isomorphism $h:{\mathbb Z}^n\to \Delta_\Psi$, i.e., $hB=\Gamma_Bh$. Then $h^{-1}:\Delta_\Psi\to {\mathbb Z}^n$ is also a $R$-module isomorphism, i.e., $Bh^{-1}=h^{-1}\Gamma_B$, and so the map $h^{-1}\Psi\iota_A$ is an $R$-module isomorphism from the right ${\mathbb Z}[A]$-module ${\mathbb Z}^n$ to the right ${\mathbb Z}[B]$-module ${\mathbb Z}^n$. This implies that  $h^{-1}\Psi\iota_A$ is an automorphism of ${\mathbb Z}^n$, and so there is $C\in{\rm GL}(n,{\mathbb Z})$ such that $C= h^{-1}\Psi\iota_A$. Thus for all $m\in{\mathbb Z}^n$, we have 
\begin{align*}
mAC &= C(mA) \\
& =  (h^{-1}\Psi\iota_A A)(m) \\
& = (h^{-1}\Psi \Gamma_A \iota_A)(m) \\
& = (h^{-1}\Gamma_B\Psi\iota_A)(m) \\
& = (Bh^{-1}\Psi\iota_A)(m) \\
& = (BC)(m) = mCB.
\end{align*}
Since $mAC=mCB$ holds for all $m\in{\mathbb Z}^n$, we have that $AC=CB$, i.e., that $A$ and $B$ are conjugate.
\end{proof}

We detail another proof of part (b) implying part (a) of Theorem \ref{characterization} that highlights the role that hyperbolicity and the profinite topology play. Suppose that there is a topological $R$-module isomorphism $\Psi:G_A\to G_B$ such that $\Psi\big(\iota_A({\mathbb Z}^n)\big) = \iota_B({\mathbb Z}^n)$. Since $\iota_B:{\mathbb Z}^n\to G_B$ is a monomorphism, there is an isomorphism $\iota_B^{-1}:\iota_B({\mathbb Z}^n)\to{\mathbb Z}^n$, and so $\Psi\big( \iota_A({\mathbb Z}^n) \big)= \iota_B({\mathbb Z}^n)$ implies that
\[ \iota_B^{-1}\Psi \iota_A:{\mathbb Z}^n\to{\mathbb Z}^n\]
is an automorphism. Since the automorphism group of ${\mathbb Z}^n$ is ${\rm GL}(n,{\mathbb Z})$, there is $C\in{\rm GL}(n,{\mathbb Z})$ such that
\[ \Psi \iota_A = \iota_B C.\]
For all $m\in{\mathbb Z}^n$, it follows that
\[ \Psi\big(\{[m]_{k,A}\}_{k\in{\mathbb N}}\big) = (\Psi\iota_A)(m) = \big(\iota_B C\big)(m) = \{[mC]_{k,B}\}_{k\in{\mathbb N}}.\]
Since $\Psi$ is an $R$-module isomorphism, we have $\Gamma_B \Psi = \Psi \Gamma_A$, and so for all $m\in{\mathbb Z}^n$,
\begin{align*}
\{[mCB]_{k,B}\}_{k\in{\mathbb N}} 
& = \Gamma_B(\{[mC]_{k,B}\}_{k\in{\mathbb N}}) \\
& = \Gamma_B\Psi(\{[m]_{k,A}\}_{k\in{\mathbb N}}) \\
& = \Psi\Gamma_A(\{[m]_{k,A}\}_{k\in{\mathbb N}}) \\
& = \Psi(\{[mA]_{k,A}\}_{k\in{\mathbb N}}) \\
& = \{[mAC]_{k,B}\}_{k\in{\mathbb N}}.
\end{align*}
This means for all $k\in{\mathbb N}$ and for all $m\in{\mathbb Z}^n$ that
\[  m(CB-AC)=mCB-mAC  \in N_{k,B} = {\mathbb Z}^n(B^{k!}-I).\]
For the standard basis of row vectors $e_1,\dots,e_n$ of ${\mathbb Z}^n$, we have $e_j(CB-AC) \in N_{k,B}$ for all $k\in{\mathbb N}$ and for all $j=1,\dots,n$. By Lemma \ref{NA}, the hyperbolicity of $B$ implies that
\[ \bigcap_{k\in{\mathbb N}} N_{k,B} = \{0\}.\]
Thus $e_j(CB-AC)= 0$ for $j=1,\dots,n$. Therefore $AC=CB$, and so $A$ and $B$ are conjugate. This completes this alternative proof of (b) implies (a) in Theorem \ref{characterization}.

When similar hyperbolic $A,B\in{\rm GL}(n,{\mathbb Z})$ are nonconjugate but strongly BF-equivalent, it is not the lack of a topological $R$-module isomorphism between $G_A$ and $G_B$ that is responsible for lack of conjugacy between $A$ and $B$. By Theorem \ref{SBFimpliesPsi} there exists at least one topological $R$-module isomorphism $\Psi:G_A\to G_B$. By part (b) of Theorem \ref{characterization}, the failure of $\Psi(\iota_A({\mathbb Z}^n))=\iota_B({\mathbb Z}^n)$ for {\it every} topological $R$-module isomorphism $\Psi:G_A\to G_B$ is responsible for the lack of conjugacy between $A$ and $B$. For instance, there is no way to take {\it any} topological $R$-module isomorphism from $G_A$ to $G_B$ and, by adjusting $G_A$ and $G_B$ by topological $R$-module automorphisms, achieve a topological $R$-module isomorphism $\Psi:G_A\to G_B$ such that $\Psi(\iota_A({\mathbb Z}^n))=\iota_B({\mathbb Z}^n)$. By part (c) of Theorem \ref{characterization}, similar statements hold regarding the failure of $\Psi\big(\iota_A({\mathbb Z}^n)\big)\cap \iota_B({\mathbb Z}^n)$ to be $R$-module isomorphic to the right ${\mathbb Z}[B]$-module ${\mathbb Z^n}$.

The $R$-submodule $\Delta_\Psi$ of $\iota_B({\mathbb Z} ^n)$ defined in the proof of part (c) of Theorem \ref{characterization} plays the key role in the characterization of conjugacy for strongly BF-equivalent similar hyperbolic $A,B\in{\rm GL}(n,{\mathbb Z})$. Since $\Psi(\iota_A({\mathbb Z}^n))=\Psi(\iota_A(\theta_A^{-1}(H_A^\prime)))$ and $\iota_B({\mathbb Z}^n)=\iota_B(\theta_B^{-1}(H_B^\prime))$ as $R$-modules, then
\[ \Delta_\Psi = \Psi(\iota_A(\theta_A^{-1}(H_A^\prime)))\cap\iota_B(\theta_B^{-1}(H_B^\prime)) .\]
The characterizations of conjugacy given Theorem \ref{characterization} imply that strongly BF-equivalent similar hyperbolic $A,B\in{\rm  GL}(n,{\mathbb Z})$ are conjugate if and only if there exists a topological $R$-module isomorphism $\Psi:G_A\to G_B$ such that either
\[ \Psi(\iota_A(\theta_A^{-1}(H_A^\prime)))=\iota_B(\theta_B^{-1}(H_B^\prime)),\]
or  $\Psi(\iota_A(\theta_A^{-1}(H_A^\prime)))\cap\iota_B(\theta_B^{-1}(H_B^\prime))\cong_R H_B^\prime$. Thus the embedded copies of $H_A^\prime$ and $H_B^\prime$ in $G_A$ and $G_B$ respectively, determine the conjugacy of $A$ with $B$ or the lack thereof.

\section{Irreducible Hyperbolic Toral Automorphisms} No assumption is made about the irreducibility of the characteristic polynomial of the similar hyperbolic ${\rm GL}(n,{\mathbb Z})$ matrices in Theorem \ref{SBFimpliesPsi} or Theorem \ref{characterization}. A hyperbolic toral automorphism $A\in{\rm GL}(n,{\mathbb Z})$ is irreducible if its characteristic polynomial is irreducible. For a topological $R$-module isomorphism $\Psi:G_A\to G_B$, more can be deduced about the nature of the intersection of $\Psi(\iota_A({\mathbb Z}^n)$ with $\iota_B({\mathbb Z}^n)$ when $A$ and $B$ have the same irreducible characteristic polynomial.

\begin{theorem}\label{finiteintersection} Suppose similar hyperbolic $A,B\in{\rm GL}(n,{\mathbb Z})$ are irreducible. Then for any topological $R$-module isomorphism $\Psi:G_A\to G_B$, either $\Psi\big(\iota_A({\mathbb Z}^n)\big)\cap \iota_B({\mathbb Z}^n)=\{0\}$ or $\Psi\big(\iota_A({\mathbb Z}^n)\big)\cap\iota_B({\mathbb Z}^n) $ has finite index in $\iota_B({\mathbb Z}^n)$.
\end{theorem}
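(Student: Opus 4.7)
The plan is to establish that $\Delta_\Psi := \Psi(\iota_A(\mathbb{Z}^n)) \cap \iota_B(\mathbb{Z}^n)$ is naturally an $R$-submodule of $\iota_B(\mathbb{Z}^n)$, and then invoke the irreducibility of the common characteristic polynomial to force this submodule to be either trivial or of full $\mathbb{Z}$-rank.

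First I observe that since $\Psi$ is a topological $R$-module isomorphism, $\Psi(\iota_A(\mathbb{Z}^n))$ is an $R$-submodule of $G_B$; since $\iota_B$ is an $R$-module homomorphism, $\iota_B(\mathbb{Z}^n)$ is also an $R$-submodule of $G_B$. Hence their intersection $\Delta_\Psi$ is an $R$-submodule of $\iota_B(\mathbb{Z}^n)$. By Lemma \ref{NA} the map $\iota_B$ is a monomorphism, so I identify $\iota_B(\mathbb{Z}^n)$ with the right $\mathbb{Z}[B]$-module $\mathbb{Z}^n$; under this identification $\Delta_\Psi$ becomes a $\mathbb{Z}[B]$-invariant subgroup of $\mathbb{Z}^n$, hence itself free abelian of some rank $r$ with $0 \leq r \leq n$.

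Next I pass to $\mathbb{Q}$-coefficients. Tensoring the inclusion $\Delta_\Psi \hookrightarrow \mathbb{Z}^n$ with the flat $\mathbb{Z}$-module $\mathbb{Q}$ yields a $\mathbb{Q}$-subspace $\Delta_\Psi \otimes_\mathbb{Z} \mathbb{Q} \subseteq \mathbb{Q}^n$ of $\mathbb{Q}$-dimension $r$ that is invariant under the right action of $B$, i.e., a $\mathbb{Q}[B]$-submodule of $\mathbb{Q}^n$. Because the characteristic polynomial $p(x)$ of $B$ is irreducible of degree $n$, the ring $\mathbb{Q}[B] \cong \mathbb{Q}[x]/(p(x))$ is a field $\mathbb{K}$, and comparing $\mathbb{Q}$-dimensions shows that $\mathbb{Q}^n$ is a one-dimensional $\mathbb{K}$-vector space whose only $\mathbb{K}$-subspaces are $\{0\}$ and itself. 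Thus $r \in \{0, n\}$: in the first case the torsion-freeness of $\iota_B(\mathbb{Z}^n)$ forces $\Delta_\Psi = \{0\}$; in the second, the quotient $\iota_B(\mathbb{Z}^n)/\Delta_\Psi$ is finite, so $\Delta_\Psi$ has finite index in $\iota_B(\mathbb{Z}^n)$.

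I do not anticipate a serious obstacle, since the decisive conceptual step, namely recognizing that $\Delta_\Psi$ is $R$-stable and therefore rationalizes to a $\mathbb{K}$-subspace of a one-dimensional $\mathbb{K}$-vector space, is essentially forced by the hypothesis. The only routine detail to verify is that the action of $B$ on $\iota_B(\mathbb{Z}^n)$ induced by $\Gamma_B$ matches the right action of $B$ on $\mathbb{Z}^n$ under $\iota_B^{-1}$, which is built into the compatibility $\Gamma_B \iota_B = \iota_B B$ established in Section \ref{basic}.
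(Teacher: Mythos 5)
Your proof is correct, and its skeleton matches the paper's: both arguments first observe that $\Delta_\Psi=\Psi\big(\iota_A({\mathbb Z}^n)\big)\cap\iota_B({\mathbb Z}^n)$ is an $R$-submodule of $G_B$ contained in $\iota_B({\mathbb Z}^n)$, pull it back through the monomorphism $\iota_B$ to a $B$-invariant subgroup $L=\iota_B^{-1}(\Delta_\Psi)$ of ${\mathbb Z}^n$, and then use irreducibility to force $L$ to have rank $0$ or $n$. Where you diverge is in the justification of that last, decisive step. The paper takes the real span $S\subseteq{\mathbb R}^n$ of generators of $L$, notes that $S$ is $B$-invariant and projects to a $T_B$-invariant subtorus of ${\mathbb T}^n$, and then cites the dynamical fact (Proposition 3.1 of \cite{KKS}) that irreducibility leaves only the trivial subtori, so a nonzero $L$ spans ${\mathbb R}^n$ and hence has finite index. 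You instead tensor with ${\mathbb Q}$ and argue directly that ${\mathbb Q}[B]\cong{\mathbb Q}[x]/(p(x))$ is a field ${\mathbb K}$ over which ${\mathbb Q}^n$ is one-dimensional, so the only $B$-invariant rational subspaces are $\{0\}$ and ${\mathbb Q}^n$; this is a self-contained linear-algebra argument that avoids the external dynamical citation (and is essentially the algebra underlying it). The paper's route has the advantage of staying in the dynamical language of invariant subtori, which fits the theme of the article; yours is more elementary and makes transparent exactly where irreducibility of $p(x)$ enters, namely through ${\mathbb Q}[B]$ being a field. Both correctly conclude that a nonzero $\Delta_\Psi$ corresponds to a full-rank, hence finite-index, subgroup of ${\mathbb Z}^n$, so that $\Delta_\Psi$ has finite index in $\iota_B({\mathbb Z}^n)$.
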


\begin{proof} Suppose $\Psi:G_A\to G_B$ is a topological $R$-module isomorphism such that
\[ \Delta_\Psi = \Psi\big(\iota_A({\mathbb Z}^n)\big)\cap \iota_B({\mathbb Z}^n)\ne\{0\}.\]
Since $\iota_A({\mathbb Z}^n)$ is an $R$-submodule of $G_A$ and $\Psi$ is an $R$-module isomorphism, it follows that
\[ \Psi\big(\iota_A({\mathbb Z}^n)\big) =\Psi\big(\Gamma_A\big(\iota_A({\mathbb Z}^n)\big)\big) = \Gamma_B\big(\Psi\big( \iota_A({\mathbb Z}^n)\big)\big),\]
and so $\Psi\big(\iota_A({\mathbb Z}^n)\big)$ is an $R$-submodule of $G_B$. Since $\iota_B({\mathbb Z}^n)$ is an $R$-submodule of $G_B$, it follows that $\Delta_\Psi$ is an $R$-submodule of $G_B$. 

The inclusion of $R$-submodules $\Delta_\Psi< \iota_B({\mathbb Z}^n)$ and the conjugacy $\Gamma_B\iota_B = \iota_BB$ imply that $\iota_B^{-1}(\Delta_\Psi)$ is a $B$-invariant subgroup of ${\mathbb Z}^n$:
\[ B(\iota_B^{-1}(\Delta_\Psi)) =\iota_B^{-1}(\Gamma_B(\Delta_\Psi)) = \iota_B^{-1}(\Delta_\Psi).\]
Let $S$ be the nontrivial subspace ${\mathbb R}^n$ spanned by a set of generators of $\iota^{-1}_B(\Delta_\Psi)$. This subspace is $B$-invariant, i.e., $B(S)=S$, and it projects to a right action $T_B$-invariant subtorus of ${\mathbb T}^n$. The irreducibility of the characteristic polynomial of $B$ implies that the only $T_B$-invariant subtori of ${\mathbb T}^n$ are $\{0\}$ and ${\mathbb T}^n$ (see Proposition 3.1 on p.\,726 in \cite{KKS}). Since $S$ is nontrivial, it follows that $\iota_B^{-1}(\Delta_\Psi)$ contains a basis for ${\mathbb R}^n$. Thus $\iota_B^{-1}(\Delta_\Psi)$ has finite index in ${\mathbb Z}^n$, and hence $\Delta_\Psi$ has finite index in $\iota_B({\mathbb Z}^n)$.
\end{proof}

When $\Delta_\Psi\ne\{0\}$, then by Theorem \ref{finiteintersection}, $\Delta_\Psi$ has finite index within $\iota_B({\mathbb Z}^n)$ and so $\Delta_\Psi$ is group-theoretically isomorphic to ${\mathbb Z}^n$. By part (c) of Theorem \ref{characterization}, only when $\Delta_\Psi$ is $R$-module isomorphic to the right ${\mathbb Z}[B]$-module ${\mathbb Z}^n$ can we conclude that $A$ and $B$ are conjugate. For any irreducible similar hyperbolic strongly BF-equivalent $A$ and $B$ that are not conjugate, such as in Example \ref{ex2}, any topological $R$-module isomorphism $\Psi:G_A\to G_B$ has either $\Delta_\Psi=\{0\}$ or it has finite index in $\iota_B({\mathbb Z}^n)$, but there is no topological $R$-module isomorphism $\Psi:G_A\to G_B$ with $\Delta_\Psi= \Psi(\iota_A(\theta_A^{-1}(H_A^\prime)))\cap\iota_B(\theta_B^{-1}(H_B^\prime))$ being $R$-module isomorphic $H_B^\prime$, and thus the embedded copies of $H_A^\prime$ and $H_B^\prime$ in $G_A$ and $G_B$ respectively, distinguish $A$ and $B$ dynamically.

\end{document}